\newcommand{\bbC}{{\mathbb{C}}}
\newcommand{\bbD}{{\mathbb{D}}}
\newcommand{\bbR}{{\mathbb{R}}}
\newcommand{\fre}{{\mathfrak{e}}}
\newcommand{\frf}{{\frak{f}}}
\newcommand{\frg}{{\frak{g}}}
\newcommand{\calC}{{\mathcal{C}}}
\newcommand{\calE}{{\mathcal{E}}}
\newcommand{\calG}{{\mathcal G}}
\newcommand{\calH}{{\mathcal H}}
\newcommand{\lb}{\label}
\newcommand{\f}{\frac}
\newcommand{\Arg}{\text{\rm{Arg}}}
\newcommand{\supp}{\text{\rm{supp}}}
\newcommand{\bi}{\bibitem}
\newcommand{\beq}{\begin{equation}}
\newcommand{\eeq}{\end{equation}}
\newcommand{\ba}{\begin{align}}
\newcommand{\ea}{\end{align}}
\newcommand{\cint}{\varointctrclockwise}
\newcounter{smalllist}
\newcommand{\comm}[1]{}
\numberwithin{equation}{section}
\newtheorem{theorem}{Theorem}[section]
\newtheorem*{p2.1}{Proposition 2.1}
\newtheorem{proposition}[theorem]{Proposition}
\newtheorem{lemma}[theorem]{Lemma}
\newtheorem{corollary}[theorem]{Corollary}
\theoremstyle{definition}
\newtheorem{example}[theorem]{Example}
\newtheorem*{remark}{Remark}
\newtheorem*{remarks}{Remarks}
\newcommand{\norm}[1]{\lVert#1\rVert}
\begin{document}

\title[Chebyshev Polynomials, III]{Asymptotics of Chebyshev Polynomials,\\ III. Sets Saturating Szeg\H{o}, Schiefermayr, and Totik--Widom Bounds}
\author[J.~S.~Christiansen, B.~Simon and M.~Zinchenko]{Jacob S.~Christiansen$^{1,4}$, Barry Simon$^{2,5}$ \\and
Maxim~Zinchenko$^{3,6}$}

\thanks{$^1$ Centre for Mathematical Sciences, Lund University, Box 118, 22100 Lund, Sweden.
 E-mail: stordal@maths.lth.se}

\thanks{$^2$ Departments of Mathematics and Physics, Mathematics 253-37, California Institute of Technology, Pasadena, CA 91125.
E-mail: bsimon@caltech.edu}

\thanks{$^3$ Department of Mathematics and Statistics, University of New Mexico,
Albuquerque, NM 87131, USA; E-mail: maxim@math.unm.edu}

\thanks{$^4$ Research supported in part by Project Grant DFF-4181-00502 from the Danish Council for Independent Research.}

\thanks{$^5$ Research supported in part by NSF grants DMS-1265592 and DMS-1665526 and in part by Israeli BSF Grant No. 2014337.}

\thanks{$^6$ Research supported in part by Simons Foundation grant CGM--281971.}

\

\date{\today}
\keywords{Chebyshev polynomials, Szeg\H{o} lower bound, Schiefermayr lower bound, Totik--Widom upper bound}
\subjclass[2010]{41A50, 30C80, 30C10}

\begin{abstract}  We determine which sets saturate the Szeg\H{o} and Schiefermayr lower bounds on the norms of Chebyshev Polynomials.
We also discuss sets that saturate the Totik--Widom upper bound.
\end{abstract}

\maketitle

\section{Introduction} \lb{s1}

Let $\fre \subset \bbC$ be a compact, not finite set.  For any continuous, complex--valued function, $f$, on $\fre$, let
\begin{equation}\label{1.1}
  \norm{f}_\fre = \sup_{z \in \fre} |f(z)|
\end{equation}
The Chebyshev polynomial, $T_n$, of $\fre$ is the (it turns out unique) degree $n$ monic polynomial that minimizes $\norm{P}_\fre$ over all degree $n$ monic polynomials, $P$.  We define
\begin{equation}\label{1.2}
  t_n = \norm{T_n}_\fre
\end{equation}
This paper continues our study \cite{CSZ1, CSYZ2} of $t_n$ and $T_n$, especially their asymptotics as $n \to \infty$.  We let $C(\fre)$ denote the logarithmic capacity of $\fre$ (see \cite[Section 3.6]{HA} or \cite{AG, Helms, Landkof, MF, Ransford} for the basics of potential theory).

Szeg\H{o} \cite{SzLB} proved for all compact $\fre \subset \bbC$ and all $n$ that
\begin{equation}\label{1.3}
  t_n \ge C(\fre)^n
\end{equation}
while Schiefermayr \cite{SchLB} proved if $\fre \subset \bbR$, then
\begin{equation}\label{1.4}
  t_n \ge 2 C(\fre)^n
\end{equation}

This paper had its genesis in a question asked us by J.~P.~Solovej about which $\fre$ have equality in \eqref{1.3} or \eqref{1.4}.  After we found the solution described below, we found that for $\fre \subset \bbR$ the question was answered by Totik \cite{Totik2011} using, in part, ideas of Peherstorfer \cite{Per} (related ideas appear earlier in Sodin--Yuditskii \cite{SY2}).  Moreover, for a special set of domains in $\bbC$, it was answered implicitly (without proof) in Totik \cite{Totik2012}.  We feel it appropriate to publish our proofs because \cite{Totik2012} is neither explicit nor comprehensive and mainly because our proofs are different and, we feel, illuminating.  In addition, the sets $\frf_n$ which we introduce in Section \ref{s3} may be useful in the future.  Here are our two main results:

\begin{theorem} [Totik \cite{Totik2011}] \lb{T1.1} Let $\fre \subset \bbR$.  Fix $n$.  Then $t_n = 2C(\fre)^n$ if and only if there is a polynomial $P$ of degree $n$ so that
\begin{equation}\label{1.5}
   \fre = P^{-1}([-2,2])
\end{equation}
\end{theorem}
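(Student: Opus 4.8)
The plan is to prove the two implications separately; the ``only if'' direction is the substantial one, and it reduces to a rigidity statement in potential theory. For the ``if'' direction, suppose $\fre = P^{-1}([-2,2])$ with $\deg P = n$. Since $\fre\subset\bbR$ is infinite and $P$ maps it into $[-2,2]\subset\bbR$, the polynomial $P$ must have real coefficients; write $c$ for its leading coefficient. Using $C([-2,2]) = 1$ together with the standard identity $C(P^{-1}(K)) = (C(K)/|c|)^{1/n}$ for the preimage of a compact set under a degree-$n$ polynomial with leading coefficient $c$ (see \cite{Ransford}), one gets $|c| = C(\fre)^{-n}$. Then $Q := P/c$ is monic of degree $n$ and, since $P(\fre) = [-2,2]$, satisfies $\norm{Q}_\fre = 2/|c| = 2C(\fre)^n$. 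Hence $t_n\le 2C(\fre)^n$, and together with Schiefermayr's bound \eqref{1.4} this forces $t_n = 2C(\fre)^n$ (and, by uniqueness of the Chebyshev polynomial, $T_n = P/c$).

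For the ``only if'' direction, assume $t_n = 2C(\fre)^n$ and recall that $T_n$ has real coefficients when $\fre\subset\bbR$. Introduce the compact set
\[
  \frf_n := T_n^{-1}\big([-t_n,t_n]\big) = \big(C(\fre)^{-n}T_n\big)^{-1}\big([-2,2]\big),
\]
the second equality being exactly the point where $t_n = 2C(\fre)^n$ enters. From $\norm{T_n}_\fre = t_n$ one has $\fre\subseteq\frf_n$, while the preimage capacity formula (now with leading coefficient $1$) gives $C(\frf_n) = C([-t_n,t_n])^{1/n} = (t_n/2)^{1/n} = C(\fre)$. Setting $P := C(\fre)^{-n}T_n$, a polynomial of exact degree $n$, we have $\frf_n = P^{-1}([-2,2])$, so it suffices to prove $\fre = \frf_n$.

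This is the heart of the matter, and I would isolate it as the following rigidity principle: \emph{if $A\subseteq B$ are compact with $C(A) = C(B) > 0$ and $\supp\mu_B = B$, then $A = B$} (here $\mu_A,\mu_B$ denote equilibrium measures), applied with $A=\fre$, $B=\frf_n$. The argument is short. With $I(\cdot)$ the logarithmic energy, $\mu_\fre$ is a probability measure carried by $\fre\subseteq\frf_n$ satisfying $I(\mu_\fre) = \log(1/C(\fre)) = \log(1/C(\frf_n)) = \inf\{I(\nu): \nu\text{ a probability measure on }\frf_n\}$; since $\mu_\fre$ attains this infimum and $I$ is strictly convex, uniqueness of the equilibrium measure gives $\mu_\fre = \mu_{\frf_n}$, hence $\supp\mu_\fre = \supp\mu_{\frf_n}$. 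But $\supp\mu_\fre\subseteq\fre$, whereas $\supp\mu_{\frf_n} = \frf_n$, because $\mu_{\frf_n}$ is the normalized pullback of the arcsine measure $\mu_{[-2,2]}$ under $P$ and $\supp\mu_{[-2,2]} = [-2,2]$. Therefore $\frf_n = \supp\mu_{\frf_n} = \supp\mu_\fre\subseteq\fre\subseteq\frf_n$, so $\fre = \frf_n = P^{-1}([-2,2])$, as desired.

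The ``if'' direction is routine bookkeeping with the capacity of $[-2,2]$ and the preimage formula. In the ``only if'' direction the crux is the rigidity principle, and within it the only genuinely delicate ingredient is $\supp\mu_{\frf_n} = \frf_n$: this fails for general compact $B$ (e.g.\ $B = [0,1]\cup\{2\}$ has $\supp\mu_B = [0,1]$), so one really must exploit that $\frf_n$ is a polynomial preimage of the ``fat'' segment $[-2,2]$ and hence has no thin exceptional points. I would be careful to check the extremal characterization of the equilibrium measure that underlies $\mu_\fre = \mu_{\frf_n}$, and to confirm that the potential-theoretic identities invoked (preimage capacity, pullback description of equilibrium measures, full support of $\mu_{[-2,2]}$) all hold on the nose, with no polar exceptional sets, for the specific sets at hand.
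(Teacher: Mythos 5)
Your proposal is correct and follows essentially the same route as the paper: the set you call $\frf_n$ is exactly the paper's $\fre_n=T_n^{-1}([-t_n,t_n])$, your ``rigidity principle'' is the paper's Proposition \ref{P2.1} combined with Lemma \ref{L2.2} (full support of $\rho_{\fre_n}$, which the paper gets from the explicit density \eqref{2.7}, i.e.\ the same pullback-of-arcsine description you invoke), and the identity $C(\frf_n)^n=t_n/2$ is the paper's \eqref{2.4}. The only cosmetic differences are that you derive the capacity identities from the general preimage formula rather than citing \cite{CSZ1}, and your ``if'' direction uses Schiefermayr's bound directly instead of observing $\fre_n=\fre$.
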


\begin{remarks} 1. We emphasize that in \eqref{1.5}, we mean that any $z \in \bbC$ with $P(z) \in [-2,2]$ has $z \in \fre$ (as well as $P(\fre)=[-2,2]$) not just for $z \in \bbR$.

2.  It is easy to see that $T_n$ is then a multiple of $P$.

3.  In particular, if $t_1 = 2 C(\fre)$ and $\fre \subset \bbR$, then $\fre$ is an interval and equality holds in \eqref{1.4} for all $n$.  We note that Totik \cite[Theorem 3]{Totik2014} has a stronger related result.  He proves that if $\lim_{n\to\infty}\norm{T_n}_\fre/C(\fre)^n = 2$ for some $\fre\in\bbR$, then $\fre$ is an interval.

4.  Totik mentions that the ideas in the result and proof are mainly in Peherstorfer \cite{Per}.  While Schiefermayr was Peherstorfer's student and \eqref{1.4} was
in Schiefermayr's thesis, \cite{SchLB} was published 7 years after \cite{Per}.  The sets for which equality holds in \eqref{1.4} are precisely the sets that Peherstorfer
called $T$-sets and which Sodin--Yuditskii \cite{SY2} call $n$-regular sets.  They are precisely the spectra of the period $n$ Jacobi matrices which we called period-$n$ sets in \cite{CSZ1}.
\end{remarks}

\begin{theorem} \lb{T1.2} Let $\fre \subset \bbC$.  Fix $n$.  Then $t_n = C(\fre)^n$ if and only if there is a polynomial, P, of degree $n$ with
\begin{equation}\label{1.6}
  O\partial(\fre) = P^{-1}(\partial\bbD)
\end{equation}
where $O\partial$ is the outer boundary.
\end{theorem}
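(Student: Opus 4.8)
The plan is to first pass to the outer boundary, then handle sufficiency by a short computation and necessity by a potential‑theoretic argument. For the reductions, recall that for every polynomial $Q$ one has $\norm{Q}_\fre = \norm{Q}_{\widehat\fre} = \norm{Q}_{O\partial(\fre)}$ by the maximum modulus principle (where $\widehat\fre$ is the polynomial convex hull and $O\partial(\fre) = \partial\widehat\fre$), and $C(\fre) = C(O\partial(\fre))$; hence $t_n$, $T_n$, and $C(\fre)^n$ are all unchanged under $\fre \mapsto O\partial(\fre)$, so we may move freely between the two. Also, the hypothesis $t_n = C(\fre)^n$ forces $C(\fre) > 0$: otherwise $t_n = 0$, which is impossible since a monic polynomial of degree $n$ cannot vanish on the infinite set $\fre$.

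Sufficiency is the easy direction. Suppose $O\partial(\fre) = P^{-1}(\partial\bbD)$ with $\deg P = n$ and leading coefficient $c$. Then $Q := P/c$ is monic of degree $n$ and $|Q| \equiv 1/|c|$ on $O\partial(\fre)$, so $t_n \le \norm{Q}_\fre = \norm{Q}_{O\partial(\fre)} = 1/|c|$. On the other hand, the composition formula for logarithmic capacity, $C(P^{-1}(E)) = \big(C(E)/|c|\big)^{1/n}$, applied with $E = \partial\bbD$ (so $C(E) = 1$), gives $C(\fre)^n = C(O\partial(\fre))^n = 1/|c|$. Hence $t_n \le C(\fre)^n$, and together with Szeg\H{o}'s bound \eqref{1.3} this yields $t_n = C(\fre)^n$. (By uniqueness of the Chebyshev polynomial, $T_n = Q$, so $P$ is a scalar multiple of $T_n$.)

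For necessity, assume $t_n = C(\fre)^n$. Write $\widehat\fre$ for the polynomial convex hull of $\fre$ and $\Omega := \overline{\bbC}\setminus\widehat\fre$ (with $\overline{\bbC} = \bbC\cup\{\infty\}$), a domain containing $\infty$, so that $O\partial(\fre) = \partial\Omega = \partial\widehat\fre$. Let $g := g_\Omega(\cdot,\infty)$ be the Green's function, extended by $0$ on $\widehat\fre$; recall $g(z) - \log|z| \to -\log C(\fre)$ as $z\to\infty$. Set $u(z) := \frac1n\log\big(|T_n(z)|/t_n\big)$, which is subharmonic on $\bbC$; since $T_n$ is monic and $t_n^{1/n} = C(\fre)$ by hypothesis, $u(z) - \log|z| \to -\log C(\fre)$ as well, so $v := u - g$ extends continuously to $\infty$ with $v(\infty) = 0$. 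On $\widehat\fre$ we have $g = 0$ and $|T_n| \le t_n$ (maximum modulus), so $v = u \le 0$ there. On $\Omega$, $v$ is subharmonic (it extends subharmonically across the removable singularity at $\infty$), bounded above, and for every $\zeta\in\partial\Omega\subseteq\fre$,
\[
  \limsup_{z\to\zeta} v(z) \;\le\; u(\zeta) - \liminf_{z\to\zeta} g(z) \;\le\; 0 - 0 \;=\; 0,
\]
using upper semicontinuity of $u$, the bound $u\le 0$ on $\fre$, and $g\ge 0$ (note: no Dirichlet regularity of $\fre$ is used here). By the maximum principle $v\le 0$ on $\Omega$, and since $v(\infty) = 0$ the maximum is attained at the interior point $\infty$, so the strong maximum principle for subharmonic functions forces $v\equiv 0$ on $\Omega$. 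In particular $T_n$ has no zeros in $\Omega$ and $\frac1n\log(|T_n(z)|/t_n) = g_\Omega(z,\infty)$ for $z\in\Omega$. Now $g_\Omega(\cdot,\infty) > 0$ on $\Omega$, so $|T_n| > t_n$ throughout $\Omega$, while $|T_n| < t_n$ on $\intt\widehat\fre$ (otherwise $|T_n|$ attains an interior maximum, forcing $T_n$ constant, impossible for $n\ge 1$); hence $\{|T_n| = t_n\}\subseteq\partial\widehat\fre = O\partial(\fre)$. Conversely, each $\zeta\in\partial\Omega = O\partial(\fre)$ is a limit of points of $\Omega$ where $|T_n| > t_n$, so $|T_n(\zeta)|\ge t_n$, while $\zeta\in\fre$ gives $|T_n(\zeta)|\le t_n$; thus $|T_n(\zeta)| = t_n$. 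Therefore $O\partial(\fre) = \{z : |T_n(z)| = t_n\} = (T_n/t_n)^{-1}(\partial\bbD)$, and $P := T_n/t_n$ is a polynomial of degree $n$ with the desired property.

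The heart of the argument, and the step I expect to require the most care, is the identity $\frac1n\log(|T_n|/t_n) = g_\Omega(\cdot,\infty)$ on $\Omega$: this is precisely where the saturation hypothesis $t_n = C(\fre)^n$ enters (it is exactly what makes $u$ and $g$ share the same logarithmic pole and the same Robin constant at $\infty$), and it requires deploying the maximum principle for subharmonic functions on the domain $\Omega\ni\infty$ in the correct form, together with the observation that once the identity is established $T_n$ is automatically zero‑free on $\Omega$. One should also be careful that the boundary estimate is phrased via $\liminf g\ge 0$ rather than $g\to 0$, so that no regularity of $\fre$ for the Dirichlet problem is needed.
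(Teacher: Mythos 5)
Your proof is correct, and while the sufficiency direction is essentially the paper's (compare with Theorem \ref{T3.2} and the second half of the paper's proof, modulo your use of the capacity composition formula $C(P^{-1}(E))^n=C(E)/|c|$ in place of a direct Green's function computation for the solid lemniscate), your necessity argument takes a genuinely different route. The paper introduces the set $\frf_n=\{z: |T_n(z)|\le t_n\}$, shows $t_n=C(\frf_n)^n$, computes the equilibrium measure of a solid lemniscate explicitly (Theorem \ref{T3.3}) to see that its support is the \emph{entire} curve $L_\alpha$, and then invokes Proposition \ref{P2.1} (nested compacts have equal capacity iff the support of the equilibrium measure of the larger lies in the smaller) to force $\partial\frf_n\subset\fre\subset\frf_n$, finishing with a topological identification of $O\partial\fre$. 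You bypass equilibrium measures entirely: the saturation hypothesis $t_n^{1/n}=C(\fre)$ is exactly what makes $u=\tfrac1n\log(|T_n|/t_n)$ and the Green's function $g$ of the outer domain share the same pole and Robin constant at $\infty$, and a maximum-principle comparison of the subharmonic difference $v=u-g$ (attaining its maximum $0$ at the interior point $\infty$) forces $u\equiv g$ there; this yields the clean trichotomy $|T_n|>t_n$ on the outer domain, $|T_n|<t_n$ on $\intt\widehat{\fre}$, and $|T_n|=t_n$ precisely on $O\partial\fre$. Your version is more self-contained and avoids the explicit equilibrium-measure formula, and is careful on regularity (phrasing the boundary estimate via $\liminf g\ge 0$); what it does not produce is the reusable machinery of the paper --- the sets $\frf_n$ and the lemniscate equilibrium measure --- which the authors exploit again in Theorems \ref{T3.6}, \ref{T3.7} and Section \ref{s3A}. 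Both arguments are sound.
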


\begin{remarks} 1.  If $\fre$ is compact, then $\bbC\setminus\fre$ has exactly one unbounded component, $\fre^\sharp$.  Its boundary is $O\partial(\fre)$.  We call $\bbC\setminus\overline{\fre^\sharp}$ the interior of $O\partial\fre$ and $\fre^\sharp$ the exterior of $O\partial\fre$.

2.  We'll state several equivalent forms of this theorem in Section \ref{s3} below.

3.  When $\fre$ is a finite union of analytic Jordan curves lying exterior to each other, this result is stated in passing and without proof in Totik \cite{Totik2012}.  In that case, $O\partial\fre = \fre$ so Totik doesn't mention outer boundaries.

4. Polynomial inverse images of $\partial\bbD$ are called lemniscates (see \cite{lem}).  We'll say more about their structure in Section \ref{s3}, but we note that generically they are a union of at most $\deg(P)$ disjoint mutually exterior analytic Jordan curves and in general, a union of at most $\deg(P)$ piecewise analytic Jordan curves with disjoint interiors but with possible intersections at finitely many points.

5. It is easy to see that $T_n$ is a multiple of $P$.

6.  In particular, $t_1 = C(\fre)$ if and only if $O\partial\fre$ is a circle.
\end{remarks}

It follows from these theorems that if $t_n$ has equality in \eqref{1.3} (resp. $\fre \subset \bbR$ and $t_n$ has equality in \eqref{1.4}), then for any $k = 1,2,\dots$, $t_{nk}$ also has equality in \eqref{1.3} (resp. \eqref{1.4}) (by using a suitable scaling of $P^k$).  We want to note that this can be proven directly:

\begin{theorem} \lb{T1.3} If $t_n$ has equality in \eqref{1.3}, then so does $t_{nk}$ for $k=1,2,\dots$.
\end{theorem}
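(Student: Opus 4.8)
The plan is to exploit the elementary fact that if $P$ is a degree $n$ monic polynomial and $Q$ is a degree $k$ monic polynomial, then $Q \circ P$ is a monic polynomial of degree $nk$, together with the submultiplicativity of Chebyshev norms and the fact that capacity behaves multiplicatively under the preimage operation. Concretely, I would first record the general inequality $t_{nk} \le \norm{(Q\circ P)}_\fre$ valid for any monic $Q$ of degree $k$, and then make a careful choice of $Q$.

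The key input is that equality in \eqref{1.3} for $t_n$ means $t_n = C(\fre)^n$, and $t_n = \norm{T_n}_\fre$ where $T_n$ is the degree $n$ Chebyshev polynomial of $\fre$. Set $c = C(\fre)$. First I would observe that $T_n$ maps $\fre$ into the disk $\{|w| \le c^n\}$, and more precisely that the image $T_n(\fre)$ relates to the set $\fre_1 := \overline{\bbD}_{c^n}$, the closed disk of radius $c^n$ centered at $0$; since $\norm{T_n}_\fre = c^n$, we have $T_n(\fre) \subseteq \fre_1$. Now the Chebyshev polynomial of the disk $\fre_1$ of radius $r$ is simply $w \mapsto w^k$ with norm $r^k$ (this is classical: $C(\overline{\bbD}_r) = r$ and the monic Chebyshev polynomials of a disk centered at the origin are the pure powers). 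Taking $Q(w) = w^k$, we get the monic degree $nk$ polynomial $T_n(z)^k$, and
\beq
  t_{nk} \le \norm{T_n^k}_\fre = \norm{T_n}_\fre^k = (c^n)^k = C(\fre)^{nk}.
\eeq
Combined with Szeg\H{o}'s inequality \eqref{1.3}, namely $t_{nk} \ge C(\fre)^{nk}$, this forces equality $t_{nk} = C(\fre)^{nk}$, which is exactly the claim.

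I do not anticipate a genuine obstacle here: the argument is a short composition trick, and the only thing to be careful about is making sure the relevant facts are cited rather than re-proven — specifically that $T_n$ is indeed monic of degree $n$ (part of the definition), that $\norm{fg}_\fre \le \norm{f}_\fre \norm{g}_\fre$ (immediate from \eqref{1.1}, and in fact here we have equality because $|T_n^k| = |T_n|^k$ pointwise), and the value of $t_n$ for a disk. The one subtlety worth a sentence is that we need $T_n^k$ to be an \emph{admissible competitor} for $t_{nk}$, i.e.\ monic of degree $nk$ — which it is, since the leading coefficient of $T_n^k$ is $1^k = 1$. Everything else is bookkeeping, and the resulting proof is only a few lines, consistent with the paper's remark that Theorem~\ref{T1.3} "can be proven directly."
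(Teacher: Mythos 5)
Your proof is correct and is essentially identical to the paper's: both use the monic competitor $(T_n)^k$, note $\norm{T_n^k}_\fre = t_n^k = C(\fre)^{nk}$, and then invoke Szeg\H{o}'s lower bound \eqref{1.3} to force equality. The framing via the disk $\overline{\bbD}_{c^n}$ and the composition $Q\circ T_n$ is harmless extra scaffolding but adds nothing beyond the paper's two-line argument.
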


\begin{proof}  Since $(T_n)^k$ is monic, $t_{nk} = \norm{T_{nk}}_\fre \le \norm{(T_n)^k}_\fre = t_n^k = C(\fre)^{nk}$ if $t_n$ has equality in \eqref{1.3}.  By Szeg\H{o}'s lower bound, we see that $t_{nk} = C(\fre)^{nk}$.
\end{proof}

\begin{theorem} \lb{T1.4} If $\fre \subset\bbR$ and $t_n$ has equality in \eqref{1.4}, then so does $t_{nk}$ for $k=1,2,\dots$.
\end{theorem}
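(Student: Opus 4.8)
\textbf{Proof strategy for Theorem \ref{T1.4}.} As with Theorem \ref{T1.3}, the plan is to exhibit one concrete monic polynomial of degree $nk$ whose sup norm on $\fre$ is at most $2C(\fre)^{nk}$; Schiefermayr's bound \eqref{1.4} then forces equality. The naive choice $(T_n)^k$ that sufficed for Theorem \ref{T1.3} is not good enough here: it only yields $t_{nk}\le\norm{(T_n)^k}_\fre=t_n^k=2^kC(\fre)^{nk}$, overshooting the target by a factor $2^{k-1}$. The remedy is to replace the $k$th power by composition with the monic Chebyshev polynomial of the interval $I:=[-t_n,t_n]$; that polynomial already saturates \eqref{1.4} for $I$ (classically, $\norm{2^{1-k}\calT_k}_{[-1,1]}=2^{1-k}=2C([-1,1])^k$ for the ordinary Chebyshev polynomial $\calT_k$), and composing with it does not waste the factor of $2$.

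In detail, I would proceed as follows. First, since $\fre\subset\bbR$ is invariant under complex conjugation, uniqueness of the Chebyshev polynomial shows $T_n$ has real coefficients, hence $T_n(\fre)\subseteq[-t_n,t_n]=I$; this holds for every $\fre\subset\bbR$, regardless of \eqref{1.4}. Second, let $S_k$ be the monic degree-$k$ Chebyshev polynomial of $I$, so that $S_k(t_n\cos\theta)=2(t_n/2)^k\cos(k\theta)$ and $\norm{S_k}_I=2(t_n/2)^k=2^{1-k}t_n^k=2C(I)^k$. Third, since $S_k$ and $T_n$ are both monic, $S_k\circ T_n$ is monic of degree $nk$, so using $T_n(\fre)\subseteq I$,
\[
t_{nk}=\norm{T_{nk}}_\fre\le\norm{S_k\circ T_n}_\fre\le\norm{S_k}_I=2^{1-k}t_n^k.
\]
Fourth, substituting $t_n=2C(\fre)^n$ gives $t_{nk}\le 2^{1-k}\bigl(2C(\fre)^n\bigr)^k=2C(\fre)^{nk}$, and \eqref{1.4} upgrades this to equality. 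Like the proof of Theorem \ref{T1.3}, this argument does not invoke Theorems \ref{T1.1}--\ref{T1.2}.

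There is no genuinely hard step: the entire content is the observation in the first paragraph that one should compose $T_n$ with the extremal polynomial of $I$ rather than iterate it, together with the triviality that a composition of monic polynomials is monic. The only point calling for (entirely standard) care is the assertion that $T_n$ has real coefficients, so that the image of the real set $\fre$ under $T_n$ lands inside the real interval $I$, on which $S_k$ is well controlled --- had $T_n(\fre)$ only been known to lie in a disk of radius $t_n$, the best monic polynomial would again be $x^k$ and we would lose the factor $2^{k-1}$. This real-coefficient fact is part of the basic theory of Chebyshev polynomials on subsets of $\bbR$ and is already used in \cite{CSZ1}.
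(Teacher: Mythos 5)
Your proposal is correct and follows essentially the same route as the paper: both replace $(T_n)^k$ by $S_k\circ T_n$ with $S_k$ the monic Chebyshev polynomial of $[-t_n,t_n]$, use $\norm{S_k}_{[-t_n,t_n]}=2(t_n/2)^k$, and invoke Schiefermayr's bound to upgrade the resulting inequality to equality. Your explicit remark that $T_n$ has real coefficients (so that $T_n(\fre)\subseteq[-t_n,t_n]$) is a point the paper leaves implicit, but it is the same argument.
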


\begin{proof} We can't use $(T_n)^k$ since that only leads to $t_{nk} \le 2^k C(\fre)^{nk}$.  The key is to realize that $z \mapsto z^k$ is the $k$th Chebyshev polynomial for $\{z\,|\, |z| \le t_n\}$, so we replace $z \mapsto z^k$ by the $k$th Chebyshev polynomial $S_k$, for $\frg_n \equiv [-t_n,t_n]$.  Since $C([-t_n,t_n]) = t_n/2$ and equality in \eqref{1.4} holds for all $n$ for intervals, we have that $\norm{S_k}_{\frg_n}=2(t_n/2)^k$.  Since $S_k\circ T_n$ is a monic polynomial of degree $kn$, we have that
\begin{equation*}
  t_{nk} \le \norm{S_k\circ T_n}_\fre \le \norm{S_k}_{\frg_n} = 2(2C(\fre)^n/2)^k = 2C(\fre)^{kn}
\end{equation*}
so, as in the last proof, $t_{nk}=2C(\fre)^{kn}$.
\end{proof}

We prove Theorem \ref{T1.1} in Section \ref{s2}, Theorem \ref{T1.2} in Section \ref{s3}, consider when the upper bound we found in \cite{CSZ1} is optimal in
Section \ref{s3A} and discuss a related problem in Section \ref{s4}.  JSC and MZ would like to thank Fiona Harrison and Elena Mantovan for the hospitality of Caltech where much of this work was done.  We are delighted to dedicate this paper to the memory of Boris Pavlov.  One of us (BS) in particular owes Boris a tremendous debt for having sent him talented undergraduates that Boris mentored in St. Petersburg (Kiselev) and Aukland (Killip) who then did doctoral studies at Caltech.

\section{The Real Case} \lb{s2}

In this section, we'll prove Theorem \ref{T1.1}.  Both it and Theorem \ref{T1.2} rely on the following simple fact.

\begin{proposition} \lb{P2.1}  Let $\fre \subset \frg$ be two compact subsets of $\bbC$ with positive capacity and let $\rho_\frg$ (resp. $\rho_\fre$) be
the potential theoretic equilibrium measure for $\frg$ (resp. $\fre$). Then $C(\fre) = C(\frg)$ if and only if $\supp(\rho_\frg) \subset \fre$.
\end{proposition}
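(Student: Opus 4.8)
The plan is to reduce everything to two standard facts from potential theory: (i) the logarithmic capacity is given by $C(\cdot)=e^{-I(\rho_{\cdot})}$, where $I(\mu)=\iint\log\frac{1}{|z-w|}\,d\mu(z)\,d\mu(w)$ is the logarithmic energy and $\rho_{\cdot}$ is the equilibrium measure; and (ii) for a compact set of positive capacity, the equilibrium measure is the \emph{unique} probability measure supported on that set which minimizes $I$, the uniqueness coming from the positive-definiteness of the logarithmic kernel on signed measures of total mass zero (equivalently, strict convexity of $I$ on probability measures of finite energy). Since $\fre\subset\frg$, every probability measure carried by $\fre$ is also a probability measure on $\frg$, so $I(\rho_\frg)\le I(\rho_\fre)$ and hence $C(\fre)\le C(\frg)$; this elementary monotonicity will be used in both directions.

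For the ``if'' direction, suppose $\supp(\rho_\frg)\subset\fre$. Then $\rho_\frg$ is itself a probability measure carried by $\fre$, so by the minimality of $\rho_\fre$ among probability measures on $\fre$ we get $I(\rho_\fre)\le I(\rho_\frg)$, i.e.\ $C(\fre)\ge C(\frg)$. Combined with the monotonicity $C(\fre)\le C(\frg)$, this yields $C(\fre)=C(\frg)$.

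For the ``only if'' direction, suppose $C(\fre)=C(\frg)$, equivalently $I(\rho_\fre)=I(\rho_\frg)$, both quantities being finite since the capacities are positive. Now $\rho_\fre$ is a probability measure supported in $\frg$ whose energy equals the minimal energy $I(\rho_\frg)$ of probability measures on $\frg$; by the uniqueness statement in (ii) applied to $\frg$, we conclude $\rho_\fre=\rho_\frg$. Therefore $\supp(\rho_\frg)=\supp(\rho_\fre)\subset\fre$, which is what we wanted.

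I do not expect a genuine obstacle here: the only points that require a little care are verifying that all the energies in play are finite (guaranteed by the positivity of the capacities, so that $\rho_\fre$ is an admissible finite-energy competitor for $\frg$) and invoking the uniqueness of the equilibrium measure correctly. That uniqueness is the one place where the argument has real content — it is the crux even though it is entirely classical — and everything else is bookkeeping with the monotonicity of capacity under inclusion.
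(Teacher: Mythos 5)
Your proof is correct and follows essentially the same route as the paper: both arguments rest on $C(\cdot)=e^{-\calE(\rho_{\cdot})}$ together with the uniqueness of the energy minimizer, using $\rho_\fre$ as a competitor for the $\frg$-problem in one direction and $\rho_\frg$ as a competitor for the $\fre$-problem in the other. The only cosmetic difference is that you package the ``if'' direction as two capacity inequalities, whereas the paper concludes directly that $\rho_\frg$ must equal $\rho_\fre$; the content is identical.
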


\begin{remark}  Section \ref{s3A} has another proof of this; see Proposition \ref{P3A.1}.
\end{remark}

\begin{proof} Let $\calE(\mu)$ be the logarithmic potential energy of a finite positive measure, i.e.
\begin{equation}\label{2.1}
  \calE(\mu) = \iint \log(|x-y|^{-1}) \, d\mu(x) d\mu(y)
\end{equation}
so that $\rho_\frg$ is the unique probability measure minimizing $\calE(\mu)$ among all probability measures with $\supp(\mu) \subset \frg$.
Since $C(\fre) = e^{-\calE(\rho_\fre)}$, we have that
\begin{equation} \lb{2.1A}
  C(\fre) = C(\frg) \iff \calE(\rho_\fre)=\calE(\rho_\frg) \iff \rho_\fre=\rho_\frg
\end{equation}
for, since $\rho_\fre$ is a trial measure for the $\frg$ potential minimum problem and the minimizer is unique, we have that $\calE(\rho_\fre) \ge \calE(\rho_\frg)$ with equality if and only if $\rho_\fre=\rho_\frg$.

If $\rho_\fre = \rho_\frg$, since $\supp(\rho_\fre) \subset \fre$, we see that $\supp(\rho_\frg) \subset \fre$.  Conversely, if $\supp(\rho_\frg) \subset \fre$,
then $\rho_\frg$ is a trial measure for the $\fre$ potential problem and so the minimizer since it is the minimizer for the larger minimization problem.  It follows that $\rho_\fre=\rho_\frg$ so, by \eqref{2.1A}, $C(\fre) = C(\frg)$.
\end{proof}

Recall that, given $\fre \subset \bbR$, in \cite{CSZ1}, we defined
\begin{equation}\label{2.2}
  \fre_n = T_n^{-1}([-t_n,t_n])
\end{equation}
and proved that
\begin{equation}\label{2.3}
  \fre \subset \fre_n \subset \bbR
\end{equation}
and that
\begin{equation}\label{2.4}
  t_n = 2 C(\fre_n)^n
\end{equation}
It is also easy to see \cite[(2.9)]{CSZ1} that if
\begin{equation}\label{2.5}
  \Delta_n(z) = \frac{2T_n(z)}{t_n}
\end{equation}
then the potential theoretic Green's function for $\fre_n$ is given by
\begin{equation}\label{2.6}
  G_{\fre_n}(z) = \frac{1}{n}\log\left|\frac{\Delta_n(z)}{2}+\sqrt{\left(\frac{\Delta_n(z)}{2}\right)^2-1}\right|
\end{equation}
This can be shown to imply that the equilibrium measure for $\fre_n$ is (\cite[Thm. 2.3]{CSZ1})
\begin{equation}\label{2.7}
  d\rho_{\fre_n}(x) = \frac{|\Delta_n'(x)|}{\,\pi n \sqrt{4-\Delta_n(x)^2}\,}\chi_{\fre_n}(x)\,dx
\end{equation}
where $\chi_{\fre_n}$ is the characteristic function of $\fre_n$.  Since $\Delta_n$ is a polynomial, $\Delta_n'$ is non-vanishing on $\fre_n$ except for a possible finite set in $\fre_n$ (which one can specify precisely but we don't need to).  It follows that

\begin{lemma} \lb{L2.2}
\begin{equation}\label{2.8}
   \supp (\rho_{\fre_n}) = \fre_n
\end{equation}
\end{lemma}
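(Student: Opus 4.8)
The plan is to read the lemma directly off the explicit density in \eqref{2.7}. The inclusion $\supp(\rho_{\fre_n}) \subseteq \fre_n$ is automatic, since \eqref{2.7} exhibits $\rho_{\fre_n}$ as a measure carried by $\fre_n$; so the whole content is the reverse inclusion, which I would obtain by combining the positivity of the density with the elementary geometry of the set $\fre_n$.

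First I would record that $\fre_n = T_n^{-1}([-t_n,t_n])$ is a finite union of nondegenerate closed intervals. Since $\fre$ is infinite, $T_n$ is nonconstant, so $t_n > 0$ and $[-t_n,t_n]$ is a genuine interval; hence $\{x\in\bbR : |T_n(x)|\le t_n\}$ is a union of at most $n$ closed intervals, and none of them can degenerate to a point: if $T_n(x_0)=\pm t_n$ with $T_n'(x_0)=0$, then a second-order Taylor expansion at $x_0$ (using $t_n>0$, so that the value $\pm t_n$ is strictly interior to $[-t_n,t_n]$ at the opposite sign) shows that a full real neighborhood of $x_0$ lies in $\fre_n$, so $x_0$ is not an isolated point. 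In particular, for every $x\in\fre_n$ and every $\veps>0$ the set $\fre_n\cap(x-\veps,x+\veps)$ has positive Lebesgue measure, and the same Taylor argument shows that any endpoint of a band of $\fre_n$ is not a zero of $\Delta_n'$.

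Next I would invoke the observation already made before the lemma: the density in \eqref{2.7}, namely
\[
 w(x) = \frac{|\Delta_n'(x)|}{\pi n \sqrt{4-\Delta_n(x)^2}},
\]
is strictly positive (with the value $+\infty$ allowed at the finitely many band endpoints, where $\Delta_n'\neq0$) at every point of $\fre_n$ except the zeros of the polynomial $\Delta_n'$, which form a finite set $F$. Putting the two facts together: given $x\in\fre_n$ and $\veps>0$, the set $\big(\fre_n\cap(x-\veps,x+\veps)\big)\setminus F$ still has positive Lebesgue measure and $w>0$ on it, so $\rho_{\fre_n}\big((x-\veps,x+\veps)\big)>0$; hence $x\in\supp(\rho_{\fre_n})$. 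This gives \eqref{2.8}.

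There is essentially no serious obstacle; the only step deserving a moment's care is the claim that $\fre_n$ has no isolated points (equivalently, that all its bands are nondegenerate), which rests on $t_n>0$ and the local behavior of $T_n$ at its extrema. As an alternative one could bypass the density formula entirely: $\fre_n$ is a compact subset of $\bbR$ with empty interior in $\bbC$, so $\partial\fre_n=\fre_n$ in $\bbC$, and the equilibrium measure of a planar compact set is supported on its outer boundary, which for $\fre_n$ is all of $\fre_n$; but the argument above is closer in spirit to \eqref{2.7} and makes the finite exceptional set explicit.
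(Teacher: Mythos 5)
Your main argument is the same as the paper's: the paper deduces the lemma directly from \eqref{2.7} by observing that the density is positive on $\fre_n$ off the finite zero set of $\Delta_n'$, and (implicitly, via the band structure established in \cite{CSZ1}) that $\fre_n$ is a finite union of nondegenerate closed intervals, so every point of $\fre_n$ carries positive $\rho_{\fre_n}$-mass in each of its neighborhoods. Your write-up of that step is fine and makes the exceptional set explicit.

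There is, however, a genuine hole in the one sub-step you yourself flag as delicate. Your second-order Taylor argument does not show that a point $x_0$ with $T_n(x_0)=t_n$ and $T_n'(x_0)=0$ is non-isolated: the parenthetical about the ``opposite sign'' only tells you that the constraint $T_n\ge -t_n$ is inactive near $x_0$, so everything hinges on whether $x_0$ is a local maximum or a local minimum of $T_n$. If $T_n$ had a local \emph{minimum} with value $+t_n$ at $x_0$ (compare $P(x)=x^2+1$ with $\alpha=1$, for which $P^{-1}([-1,1])\cap\bbR=\{0\}$), then $x_0$ would be an isolated point of $\fre_n$ and your expansion gives nothing. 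What rules this out is not local calculus but the alternation theorem for real Chebyshev polynomials: by \cite{CSZ1}, $\fre_n$ is a union of $n$ closed nondegenerate bands, each mapped bijectively by $T_n$ onto $[-t_n,t_n]$, so interior critical points on the level $|T_n|=t_n$ are exactly band-touching points where $T_n$ has a local maximum at $+t_n$ or a local minimum at $-t_n$. You should cite that structure (as the paper implicitly does) rather than derive nondegeneracy from the Taylor expansion. Your closing ``alternative'' argument is also not a proof: the general fact that the equilibrium measure is \emph{supported on} the outer boundary gives only $\supp(\rho_{\fre_n})\subseteq\fre_n$, i.e.\ the trivial inclusion; it does not yield equality (e.g.\ $[0,1]\cup\{2\}$ has equilibrium measure supported on $[0,1]$ only).
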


\begin{proof} [Proof of Theorem \ref{T1.1}]  Since $\fre \subset \fre_n$, by Proposition \ref{P2.1}, we have that
\begin{equation} \lb{2.8A}
   C(\fre) = C(\fre_n) \iff \fre_n = \supp(\rho_{\fre_n}) \subset \fre \iff \fre = \fre_n
\end{equation}

On the one hand, by \eqref{2.4}, $t_n=2C(\fre)^n \Rightarrow C(\fre)=C(\fre_n) \Rightarrow \fre=\fre_n \Rightarrow \fre=\Delta_n^{-1}([-2,2])$ so \eqref{1.5} holds with $P=\Delta_n$.  On the other hand, if \eqref{1.5} holds, it is easy to see that $T_n=cP$ and then that $\fre_n = \fre$, so by \eqref{2.4}, we get equality in \eqref{1.4}.
\end{proof}

The above proof is only a slight variant of the proof in Totik \cite{Totik2011}.  We include it mainly to set the stage for the next section.

\section{The Complex Case} \lb{s3}

In this section, we will prove Theorem \ref{T1.2}.  The key to the proof is to define a complex analog of the sets $\fre_n$.  We believe that these sets, $\frf_n$, will be useful elsewhere and are the most important idea in this paper.  Given a compact set $\fre \subset \bbC$ and its Chebyshev polynomial, $T_n$, we define
\begin{equation}\label{3.1}
  \frf_n = \{z\,|\,|T_n(z)| \le t_n\} = T_n^{-1}\bigl(\{z\,|\,|z| \le t_n\}\bigr)
\end{equation}

\begin{theorem} \lb{T3.1} (a)
\begin{equation}\label{3.2}
  \fre \subset \frf_n
\end{equation}
(b)
\begin{equation}\label{3.3}
  \norm{T_n}_\fre = t_n = C(\frf_n)^n
\end{equation}
\end{theorem}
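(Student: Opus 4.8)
The plan is to dispose of (a) immediately from the defining property of $t_n$, and to reduce (b) to the standard behaviour of the potential-theoretic Green's function under a polynomial pullback, exactly paralleling what was done for $\fre_n$ via \eqref{2.6} in Section \ref{s2}, only now the target set is a disk, whose Green's function is completely explicit. For (a): by definition $t_n = \norm{T_n}_\fre = \sup_{z\in\fre}|T_n(z)|$, so every $z\in\fre$ satisfies $|T_n(z)|\le t_n$, which is precisely the condition defining $\frf_n$ in \eqref{3.1}; hence $\fre\subset\frf_n$.

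For (b), the first equality $\norm{T_n}_\fre = t_n$ is just \eqref{1.2}, so the content is $C(\frf_n)^n = t_n$. Writing $\bbD_r = \{w : |w| < r\}$, the set $\frf_n = T_n^{-1}(\overline{\bbD_{t_n}})$ is compact (preimage of a compact set under a polynomial) and has positive capacity (it contains a continuum, since $\overline{\bbD_{t_n}}$ has nonempty interior). The key input is the well-known identity: if $P$ is monic of degree $n$ and $K\subset\bbC$ is compact of positive capacity, then $G_{P^{-1}(K)}(z) = \tfrac1n G_K(P(z))$. Applying this with $P = T_n$, $K = \overline{\bbD_{t_n}}$, and using $G_{\overline{\bbD_{t_n}}}(w) = \log_+(|w|/t_n)$, we obtain, on the unbounded component of $\bbC\setminus\frf_n$ (where $|T_n(z)| > t_n$),
\begin{equation*}
  G_{\frf_n}(z) = \tfrac1n\log|T_n(z)| - \tfrac1n\log t_n .
\end{equation*}
Since $T_n$ is monic of degree $n$, $\log|T_n(z)| = n\log|z| + o(1)$ as $z\to\infty$, so $G_{\frf_n}(z) = \log|z| - \tfrac1n\log t_n + o(1)$; reading off the Robin constant gives $C(\frf_n) = \exp(\tfrac1n\log t_n) = t_n^{1/n}$, i.e., $C(\frf_n)^n = t_n$.

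If one prefers not to quote the pullback formula, one can instead verify directly — just as in the derivation of \eqref{2.6} — that $z\mapsto \tfrac1n\log_+(|T_n(z)|/t_n)$ is nonnegative and harmonic on the complement of $\frf_n$ away from $\infty$, vanishes on $\partial\frf_n$, and has a logarithmic pole $\log|z| + O(1)$ at $\infty$; uniqueness of the Green's function identifies it, and the capacity is extracted from the constant term as above. The main obstacle is a mild one: there is no deep difficulty here, only the need to justify the Green's-function pullback identity (equivalently, that $\frf_n$ is regular enough for the naive computation to be legitimate), which is classical potential theory; the rest is bookkeeping with the definition of $t_n$.
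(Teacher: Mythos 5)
Your proposal is correct and is essentially the paper's argument: part (a) is handled by the same one-line observation, and for part (b) your ``alternative'' direct verification --- checking that $\tfrac1n\log_+(|T_n(z)|/t_n)$ is harmonic off $\frf_n$, vanishes on $\frf_n$, and behaves like $\log|z|-\tfrac1n\log t_n$ at infinity, then reading off the Robin constant --- is exactly the proof given in the paper. Quoting the Green's-function pullback identity instead is only a repackaging of that same computation (the paper itself invokes the pullback formula later, in Lemma \ref{L4.1A}).
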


\begin{remarks} 1.  These are analogs of \eqref{2.3} and \eqref{2.4}.

2.  They immediately imply \eqref{1.3} (not that Szeg\H{o}'s proof \cite[Theorem 4.3.7]{OT} is very hard) since \eqref{3.2}$\Rightarrow C(\frf_n) \ge C(\fre)$.
\end{remarks}

\begin{proof} (a) is trivial.

(b) Let $h$ be defined on $\bbC$ by
\begin{equation}\label{3.4}
  h(z) = \left\{
           \begin{array}{ll}
             0, & \hbox{ if } |T_n(z)| \le t_n \\
             \frac{1}{n}\log\left(\frac{|T_n(z)|}{t_n}\right), & \hbox{ if } |T_n(z)| \ge t_n
           \end{array}
         \right.
\end{equation}
Then $h$ is continuous on $\bbC$ and harmonic on $\bbC\setminus\frf_n$ and near infinity has the asymptotics
\begin{equation}\label{3.5}
  h(z) = \log |z| -\tfrac{1}{n} \log(t_n)+\mbox{o}(1)
\end{equation}
From the first term and $h(z)=0$ on $\frf_n$, we see that $h$ is the Green's function, $G_{\frf_n}$, for $\frf_n$.  By the realization of the capacity
in the asymptotics of the Green's function \cite[(3.7.4)\,\&\,(3.7.6)]{HA} and \eqref{3.5}, we see that
\begin{equation*}
  C(\frf_n) = t_n^{1/n}
\end{equation*}
which is \eqref{3.3}.
\end{proof}

The proof of (b) just depended on the form of $\frf_n$ and not that, apriori, $T_n$ is a Chebyshev polynomial.  We thus can prove:

\begin{theorem} \lb{T3.2} Let $P$ be a degree $n$ polynomial with
\begin{equation}\label{3.5A}
  P(z) = cz^n+\dots
\end{equation}
and let
\begin{equation}\label{3.6}
  S_\alpha = \{z\,|\,|P(z)| \le \alpha \}
\end{equation}
for some $\alpha > 0$.  Then
\begin{equation}\label{3.5B}
  C(S_\alpha) = (\alpha/c)^{1/n}
\end{equation}
and for $S_\alpha$, we have $T_n=c^{-1}P$.  In particular, $S_\alpha$ obeys 
\begin{equation}
 \norm{T_n}_{S_\alpha} = C(S_\alpha)^n
\end{equation} 
\end{theorem}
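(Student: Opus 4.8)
The plan is to imitate the argument of Theorem \ref{T3.1}(b) verbatim, observing that nothing there used the minimality of $T_n$ --- only the explicit shape of the level set $\frf_n = T_n^{-1}(\{|z|\le t_n\})$. First I would define, in analogy with \eqref{3.4}, the function
\begin{equation*}
  h(z) = \begin{cases} 0, & |P(z)| \le \alpha, \\ \tfrac{1}{n}\log\bigl(|P(z)|/\alpha\bigr), & |P(z)| \ge \alpha. \end{cases}
\end{equation*}
This $h$ is continuous on $\bbC$ (the two formulas agree on the level set $|P(z)|=\alpha$), harmonic off $S_\alpha$ (since $\log|P|$ is harmonic away from the zeros of $P$, and those zeros lie in the interior of $S_\alpha$ where $h\equiv 0$), vanishes on $S_\alpha$, and is bounded below near each zero of $P$, hence extends harmonically with no singularity there. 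Near infinity, writing $P(z) = cz^n + \dots$, we get the asymptotics $h(z) = \log|z| + \tfrac1n\log(|c|/\alpha) + \oh(1)$. By the characterization of the Green's function (the unique function with a logarithmic pole of the stated type at infinity, harmonic on the unbounded complementary component, and zero q.e. on the boundary), $h = G_{S_\alpha}$, and reading off the Robin constant from the $\oh(1)$-term via \cite[(3.7.4)\,\&\,(3.7.6)]{HA} gives $C(S_\alpha) = (\alpha/|c|)^{1/n}$, which is \eqref{3.5B} (up to the harmless absolute value, assuming as one may that $c>0$ after multiplying $P$ by a unimodular constant).

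Next, to see $T_n = c^{-1}P$ for the set $S_\alpha$: the polynomial $c^{-1}P$ is monic of degree $n$ and $\norm{c^{-1}P}_{S_\alpha} = \alpha/|c| = C(S_\alpha)^n$ by \eqref{3.5B}. By Szeg\H{o}'s lower bound \eqref{1.3}, every monic degree-$n$ polynomial $Q$ satisfies $\norm{Q}_{S_\alpha} \ge C(S_\alpha)^n$, so $c^{-1}P$ is a minimizer; by the uniqueness of the Chebyshev polynomial (stated in the Introduction) it equals $T_n$. The final display $\norm{T_n}_{S_\alpha} = C(S_\alpha)^n$ is then immediate from this computation, which also re-proves that $S_\alpha$ saturates \eqref{1.3}.

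There is no real obstacle here; the one point that needs a word of care is the behavior of $h$ at the zeros of $P$, which lie strictly inside $S_\alpha$ --- there $h$ is identically zero on a neighborhood, so harmonicity is automatic and no removable-singularity argument is even needed. (Equivalently, one notes $G_{S_\alpha}$ only sees the unbounded component of the complement, where $|P(z)| > \alpha$.) Everything else is the same potential-theoretic bookkeeping as in Theorem \ref{T3.1}.
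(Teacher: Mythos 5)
Your proposal is correct and follows essentially the same route as the paper: identify the Green's function of $S_\alpha$ as $\tfrac1n\log(|P(z)|/\alpha)$ off $S_\alpha$, read off the capacity from its asymptotics at infinity, and then combine Szeg\H{o}'s lower bound with the minimality and uniqueness of the Chebyshev polynomial to conclude $T_n=c^{-1}P$. Your added remarks about the zeros of $P$ and the absolute value $|c|$ are harmless refinements of details the paper leaves implicit.
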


\begin{proof} As in the proof of Theorem \ref{T3.1}, outside of $S_\alpha$, the Green's function is $\tfrac{1}{n}\log(|P(z)|/\alpha)$, whose asymptotics at infinity is $\log(|z|)+\tfrac{1}{n}\log(c/\alpha)+\mbox{o}(1)$ so \eqref{3.5B} holds.

Note that $Q=c^{-1}P$ is a monic polynomial with $\norm{Q}_{S_\alpha}=C(S_\alpha)^n$. By Szeg\H{o}'s lower bound, $\norm{Q}_{S_\alpha} \le t_n$ which implies that $Q=T_n$ by the minimum and uniqueness properties of $T_n$.  \end{proof}

Clearly
\begin{equation}\label{3.6A}
  \partial S_\alpha = \{z\,|\, |P(z)| = \alpha\} \equiv L_\alpha
\end{equation}
This is a \emph{lemniscate} \cite{lem}.   $|P|$ is $C^1$ away from the zeros of $P$ and, using the Cauchy--Riemann equations, it is easy to see that
if $P(z_0) \ne 0$ then $\nabla |P|(z_0) = 0 \Leftrightarrow P'(z_0) = 0$. Hence the critical values of $|P|$ are precisely those $\alpha$ for which there
is a $z_0$ with $P'(z_0)=0$ and $|P(z_0)|=\alpha$.  At non-critical values, $L_\alpha$ is thus a union of disjoint, mutually exterior, analytic Jordan curves.
For $\alpha$ small, the number of curves is exactly the number of distinct zeros of $P$.  As $\alpha$ increases, the number of components changes exactly
as $\alpha$ reaches a critical value, $\alpha_0$, at which point the number of components decreases by the number of critical points (counting multiplicity)
on $L_{\alpha_0}$.  At such values, the closure of the components of the non-critical points are piecewise analytic Jordan curves with disjoint interiors and with corners at the critical points.  For $\alpha$ large, $L_\alpha$ is a single analytic Jordan curve.

We call $S_\alpha$, which is the union of the insides of the Jordan curves in $L_\alpha$, a solid lemniscate.  It is easy to describe the equilibrium measure of such sets.

\begin{theorem} \lb{T3.3} Fix a degree $n$ polynomial $P$ and $\alpha > 0$.  Then

(a)
\begin{equation}\label{3.7}
  d\rho \equiv \frac{1}{2\pi in}\frac{P'(z)}{P(z)} dz \restriction L_\alpha
\end{equation}
is a probability measure

(b) On $L_{\alpha}$, we have that
\begin{equation}\label{3.8}
  \frac{P'(z)}{P(z)}\, dz = \left|\frac{P'(z)}{P(z)}\right|\, |dz|
\end{equation}

(c)
\begin{equation}\label{3.9}
  d\rho = \frac{1}{2\pi n} \frac{d}{|dz|} \Arg(P(z)) |dz| \restriction L_\alpha
\end{equation}

(d) The measure in \eqref{3.7} is the equilibrium measure of $S_\alpha$.

(e) 
\,$\supp(d\rho) = L_\alpha$.
\end{theorem}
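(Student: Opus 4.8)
The plan is to obtain \eqref{3.8} and \eqref{3.9} from a local computation on the level set $L_\alpha$, to deduce (a) from them together with the argument principle, to prove (d) by computing the logarithmic potential of $\rho$ in closed form, and then to read off (e).

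Since $|P|\equiv\alpha$ on $L_\alpha$ and $P'$ has only finitely many zeros, I would parametrize each analytic subarc of $L_\alpha$ (the pieces between consecutive critical points of $P$, if there are any) smoothly by $t\mapsto z(t)$ and write $P(z(t))=\alpha e^{i\phi(t)}$ with $\phi$ real-analytic. Differentiating gives $P'(z(t))z'(t)=i\phi'(t)P(z(t))$, so that along $L_\alpha$ one has $\frac{P'(z)}{P(z)}\,dz=i\,\phi'(t)\,dt=i\,d(\Arg P)$. Since $|P|$ is constant on $L_\alpha$, $\Arg P$ is monotone there for a suitable orientation of $L_\alpha$ (it cannot be locally constant, or else $P$ would be constant on a subarc, hence on $\bbC$), so $\phi'\ge0$ and $\phi'(t)\,dt=\big|\frac{P'(z)}{P(z)}\big|\,|dz|$; this gives \eqref{3.8}, and feeding it into \eqref{3.7} yields \eqref{3.9} and the nonnegativity of $d\rho$. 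For (a), the total mass of $d\rho=\frac{1}{2\pi n}\,d(\Arg P)$ over the positively oriented $L_\alpha$ is then $\frac{1}{2\pi i n}\oint_{L_\alpha}\frac{P'(z)}{P(z)}\,dz=\frac1n\cdot\#\{\text{zeros of }P\text{ in }S_\alpha\}=1$ by the argument principle, since all $n$ zeros of $P$ (with multiplicity) lie in $\{|P|<\alpha\}\subset S_\alpha$. (When $L_\alpha$ has corners one sums over the finitely many analytic subarcs; the corners contribute nothing.)

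For (d), it suffices to show that the logarithmic potential $U^\rho(x):=\int\log\frac{1}{|x-w|}\,d\rho(w)$ equals the Robin constant $\log\frac{1}{C(S_\alpha)}$ for every $x\in S_\alpha$: since $\rho$ is a probability measure supported on $L_\alpha\subset S_\alpha$ and $S_\alpha$ is non-polar (it has nonempty interior), the uniqueness of the equilibrium measure (Frostman) then forces $\rho=\rho_{S_\alpha}$. Because $P$ is an $n$-fold branched cover of $\bbC$, its restriction to $L_\alpha=P^{-1}(\{|\zeta|=\alpha\})$ is, off a finite set, an $n$-to-$1$ map onto that circle, and by \eqref{3.9} the measure $d\rho$ is $\frac1n$ times the pullback under $P$ of normalized arclength; hence $\int g\,d\rho=\frac1n\int_0^{2\pi}\big(\sum_{P(w)=\alpha e^{i\theta}}g(w)\big)\frac{d\theta}{2\pi}$ for continuous $g$. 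Taking $g(w)=\log|x-w|$, using $\prod_{P(w)=\zeta}(x-w)=\frac1c\big(P(x)-\zeta\big)$ (compare leading coefficients) and the classical identity $\frac{1}{2\pi}\int_0^{2\pi}\log|a-re^{i\theta}|\,d\theta=\log\max(|a|,r)$ gives $\int\log|x-w|\,d\rho(w)=\frac1n\log\frac{\max(|P(x)|,\alpha)}{|c|}$, so that $U^\rho(x)=\frac1n\log\frac{|c|}{\alpha}=\log\frac{1}{C(S_\alpha)}$ on $S_\alpha$ by Theorem~\ref{T3.2}. (The same identity for all $x$ re-derives \eqref{3.5B} and exhibits $-U^\rho$ as $G_{S_\alpha}$ plus a constant, matching the Green's function found in the proof of Theorem~\ref{T3.2}.)

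Finally, for (e): writing $d\rho=\frac{1}{2\pi n}\psi(s)\,ds$ along $L_\alpha$ in arclength, with $\psi(s)=\frac{d}{ds}\Arg P(z(s))\ge0$, if $\rho$ gave zero mass to a relatively open arc $A\subset L_\alpha$ then $\psi\equiv0$ on $A$, so $\Arg P$, hence $P$, would be constant on $A$ and therefore on $\bbC$ --- impossible since $n\ge1$; thus $\rho$ charges every open subset of $L_\alpha$ and $\supp(\rho)=L_\alpha$. The calculations themselves are routine; the one place that needs care is the behaviour at the critical points of $P$ lying on $L_\alpha$ --- the corners of the lemniscate, where $L_\alpha$ is not smooth and $\phi',\psi$ may be unbounded --- which one handles by working on the finitely many analytic subarcs and noting this exceptional set is both $\rho$-null and arclength-null. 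The only external ingredient is the uniqueness clause of Frostman's theorem used in (d); the rest follows from Theorem~\ref{T3.2}.
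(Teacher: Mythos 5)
Your proposal is correct, and for parts (a)--(c) and (e) it is essentially the paper's own argument: the same local computation $P(z(t))=\alpha e^{i\phi(t)}$, the same monotonicity of $\Arg P$ along each analytic subarc (the paper gets it from the Cauchy--Riemann equations for $\log P$ and the outward growth of $\log|P|$, you get it from the observation that $\phi'$ cannot vanish on a subarc without forcing $P$ to be constant), the same use of the argument principle for the total mass, and the same ``positive density off a finite set'' argument for the support.

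Part (d) is where you genuinely diverge. The paper fixes $w$ \emph{outside} $S_\alpha$, applies the residue calculus to $\int_\Gamma \log(z-w)\,d\rho(z)$ on each Jordan curve $\Gamma$ of $L_\alpha$, and obtains $\int\log|z-w|\,d\rho(z)=\tfrac1n\log(|P(w)|/c)$, i.e.\ the already-computed Green's function up to a constant; the identification with the equilibrium measure then comes from the characterization of $\rho_{S_\alpha}$ via the Green's function. You instead push the computation to the \emph{image} side: using that $d\rho$ is $\tfrac1n$ times the pullback of normalized arclength on $\{|\zeta|=\alpha\}$, the factorization $\prod_{P(w)=\zeta}(x-w)=(P(x)-\zeta)/c$, and the mean-value identity $\tfrac{1}{2\pi}\int_0^{2\pi}\log|a-re^{i\theta}|\,d\theta=\log\max(|a|,r)$, you get $U^\rho(x)=\tfrac1n\log\bigl(|c|/\max(|P(x)|,\alpha)\bigr)^{\vphantom{1}}\cdot(-1)^0$ explicitly for \emph{all} $x$, hence constant equal to the Robin constant on $S_\alpha$ and $\le$ that constant elsewhere, and you conclude by Frostman's uniqueness (equivalently, by noting $\calE(\rho)$ equals the minimal energy). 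The two computations are dual uses of the same algebraic identity; yours buys a single formula for the potential valid everywhere (simultaneously re-deriving \eqref{3.5B} and the Green's function of Theorem \ref{T3.2}), at the cost of importing the Frostman characterization, whereas the paper's stays entirely inside the Green's-function framework it has already set up. Your handling of the corners of $L_\alpha$ (finitely many critical points, null for both $\rho$ and arclength) is the right precaution and matches the paper's implicit treatment.
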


\begin{remarks}  1.  The symbol $dz$ on a curve needs an orientation.  We'll specify this orientation in the proof.  Basically, it is counter-clockwise around $S_\alpha$.

2.  The proof shows that each Jordan curve in $L_\alpha$ has $\rho$ measure $k/n$, where $k$ is the number of zeros of $P$ (counting multiplicity) inside that curve.

3.  One can also prove the critical (d) by using the formula for the Green's function and by evaluating the normal derivative of $\log(|P|)$ on $L_\alpha$.
\end{remarks}

\begin{proof}  (a), (b), (c) Since $P$ has no zeros on $L_\alpha$, we can locally define an analytic function $W(z) = \log(P(z))$ on each Jordan curve in $L_\alpha$.  Its derivative is $P'(z)/P(z)$ irrespective of which blanch of $\log$ that we take.  Moreover, if locally $P(z) = \alpha e^{i\theta(z)}$ on each such curve and if we parameterize the curve by arc length, $\gamma(s)$, with the curve oriented so that $S_\alpha$ is to the curve's left, then, for $z_1$ and $z_0$ nearby points with $z_j=\gamma(s_j)$ where $s_1>s_0$, we have that $\theta(s_1) > \theta(s_0)$. This is easy to see using the Cauchy--Riemann equations for $\log(P(z))$ and the fact that its real part increases in the direction outwards from $S_\alpha$. Moreover, if $d\theta(\gamma(s))/ds$ vanishes at $s=s_0$, since $\mbox{Re} \, W(s)$ is constant on $\gamma$ we conclude that $W'(z_0) = 0 \Rightarrow P'(z_0) = 0$.  Thus $d\theta/ds$ is strictly positive except at the critical points which implies that $\theta$ is strictly increasing on $\gamma$.

Clearly,
\begin{equation*}
  \int_{z_0}^{z_1} \frac{P'(z)}{P(z)}\, dz = \log\left(\frac{P(z_1)}{P(z_0)}\right) = \log\left(\frac{\alpha e^{i\theta_1}}{\alpha e^{i\theta_0}}\right) = i(\theta_1-\theta_0)
\end{equation*}
proving that the measure in \eqref{3.7} is a positive measure.  By the argument principle, $n$ times the integral over $L_\alpha$ is the number of zeros in $S_\alpha$, so, the measure has total mass 1.  This proves (a) and the formula for $P'/P$ in terms of $\theta'$ proves (c).  The positivity of the measure in (a) proves (b).

(d) Fix $w \in \bbC\setminus S_\alpha$.  Let $\Gamma$ be a single Jordan curve in $L_\alpha$ and $R$ its interior.  Then $\log(z-w)$ is analytic in a neighborhood of $\overline{R}$, so, by the residue calculus and the definition of $d\mu$, if
\begin{equation}\label{3.10}
  P(z) = c \prod_{j=1}^{n} (z-\zeta_j)
\end{equation}
then
\begin{equation*}
  \int_\Gamma \log(z-w) d\mu(z) = \frac{1}{n} \sum_{\zeta_j \in R} \log(\zeta_j-w)
\end{equation*}
Taking real parts and summing over the Jordan curves, we get
\begin{equation}\label{3.11}
   \int \log|z-w|\, d\mu(z) = \frac{1}{n} \log(|P(w)|/c)
\end{equation}
which we have seen is the Green's function up to a constant.  This implies that $d\mu$ is the equilibrium measure.

(e) We've seen that $\theta'$ is positive except on the finite set of critical points so the support is all of $L_\alpha$.
\end{proof}

The last preliminary we need is

\begin{lemma} \lb{L3.4} Fix $\alpha > 0$ and let $\fre \subset S_\alpha$.  Then
\begin{equation}\label{3.12}
  C(\fre) = C(S_\alpha) \iff L_\alpha \subset \fre
\end{equation}
\end{lemma}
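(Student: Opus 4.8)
The plan is to obtain Lemma \ref{L3.4} as an immediate consequence of Proposition \ref{P2.1} and Theorem \ref{T3.3}, in exact parallel with how Theorem \ref{T1.1} followed from Proposition \ref{P2.1} and Lemma \ref{L2.2} in the real case; the set $S_\alpha$ here plays the role of $\fre_n$ there, with $L_\alpha = \partial S_\alpha$ playing the role of $\supp(\rho_{\fre_n}) = \fre_n$.

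First I would record that $C(S_\alpha) > 0$: indeed, since $P$ has degree $n$ its leading coefficient $c$ is nonzero, so Theorem \ref{T3.2} gives $C(S_\alpha) = (\alpha/c)^{1/n} > 0$. Thus $S_\alpha$ is a compact set of positive capacity and Proposition \ref{P2.1} applies with $\frg = S_\alpha$ whenever $\fre$ (compact, as throughout) also has positive capacity. The key input is then the identification of the support of the equilibrium measure of $S_\alpha$: by Theorem \ref{T3.3}(d) the equilibrium measure $\rho_{S_\alpha}$ equals the measure $d\rho$ of \eqref{3.7}, and by Theorem \ref{T3.3}(e), $\supp(d\rho) = L_\alpha$; combining with \eqref{3.6A} gives $\supp(\rho_{S_\alpha}) = L_\alpha = \partial S_\alpha$. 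Feeding this into Proposition \ref{P2.1} with $\fre \subset S_\alpha$ of positive capacity yields
\[
  C(\fre) = C(S_\alpha) \iff \supp(\rho_{S_\alpha}) \subset \fre \iff L_\alpha \subset \fre,
\]
which is \eqref{3.12}.

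It remains only to dispose of the degenerate case in which $\fre$ has capacity zero, where Proposition \ref{P2.1} does not directly apply. In that case $C(\fre) = 0 < C(S_\alpha)$, so the left side of \eqref{3.12} fails; and since $L_\alpha$ carries the probability measure $\rho_{S_\alpha}$ of finite energy it has positive capacity (alternatively, $L_\alpha$ is a finite union of piecewise analytic Jordan curves), so $L_\alpha \not\subset \fre$ and the right side fails too — hence the equivalence holds vacuously. I do not anticipate a real obstacle: all the substantive work has been done in Theorem \ref{T3.3} (which supplies the complex analog of Lemma \ref{L2.2}) and in Proposition \ref{P2.1}; the only point requiring a moment's care is the positive-capacity hypothesis, handled as just described.
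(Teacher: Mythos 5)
Your proof is correct and follows exactly the paper's route: the paper also deduces Lemma \ref{L3.4} immediately from Proposition \ref{P2.1} together with Theorem \ref{T3.3} (parts (d) and (e) identifying $\supp(\rho_{S_\alpha})=L_\alpha$). Your extra care with the positive-capacity hypothesis is a sensible addition that the paper leaves implicit.
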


\begin{proof}  Immediate from Proposition \ref{P2.1} and the last theorem.
\end{proof}

\begin{proof} [Proof of Theorem \ref{T1.2}] Suppose equality holds in \eqref{1.3}.  Then $C(\fre) = C(\frf_n)$.  Let $P=T_n/t_n$ so that $\frf_n = S_{\alpha=1}$.
By \eqref{3.12}, $P^{-1}(\partial\bbD) \subset \fre \subset P^{-1}(\bbD)$.  By the second inclusion, $\bbC\setminus P^{-1}(\bbD)$ is contained in the unbounded
component of $\bbC\setminus\fre$.  By the first inclusion, we conclude that $O\partial\fre = P^{-1}(\partial\bbD)$.

Conversely, by Theorem \ref{T3.2}, if \eqref{1.6} holds, let $S_1$ be the solid lemniscate associated to $P$.  By \eqref{1.6} and the lemma, $C(\fre)=C(S_1)$.  By Theorem \ref{T3.2}, the monic multiple, $Q$, of $P$ is the Chebyshev polynomial for $S_1$ and $\norm{Q}_{S_1} = C(S_1)^n$.   Since $\bbC\setminus S_1 \subset \bbC\setminus O\partial\fre$, we have that $\fre \subset S_1$ and thus $\norm{Q}_\fre \le \norm{Q}_{S_1} = C(\fre)^n$.  This implies that $Q$ is the Chebyshev polynomial of $\fre$ and that equality holds in \eqref{1.3}.
\end{proof}

We end this section by exploring some alternate forms and consequences of Theorem \ref{T1.2}.

\begin{corollary} \lb{C3.5}  Let $\fre$ be a compact subset of $\bbC$ so that $\bbC\setminus\fre$ is connected.  Fix $n$. Then $t_n = C(\fre)^n$ if and only if $\fre$ is a solid lemniscate.
\end{corollary}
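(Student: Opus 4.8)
The plan is to deduce Corollary \ref{C3.5} from Theorem \ref{T1.2}. The first step is to observe that the hypothesis that $\bbC\setminus\fre$ is connected means precisely that $\bbC\setminus\fre$ is itself the unbounded component $\fre^\sharp$, so that $O\partial\fre=\partial\fre$. With this, Theorem \ref{T1.2} reads: $t_n=C(\fre)^n$ if and only if $\partial\fre=P^{-1}(\partial\bbD)$ for some polynomial $P$ of degree $n$. So the content of the corollary is that, for a compact $\fre$ with connected complement, the condition ``$\partial\fre=P^{-1}(\partial\bbD)$ for a degree $n$ polynomial $P$'' is equivalent to ``$\fre$ is a solid lemniscate'', where this last is understood as $\fre=\{z:|Q(z)|\le\alpha\}$ for a degree $n$ polynomial $Q$ and some $\alpha>0$. (It is worth keeping the degree bookkeeping explicit: the two-component lemniscate $\{z:|z^2-4|\le 1\}$ shows that without asking that the defining polynomial have degree dividing $n$ the equivalence can fail for, e.g., odd $n$.)

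For the forward direction I would start from $\partial\fre=P^{-1}(\partial\bbD)$ with $\deg P=n$ and prove that $\fre=P^{-1}(\ol\bbD)=\{z:|P(z)|\le 1\}$, which is then a solid lemniscate with the same $P$ and $\alpha=1$. The inclusion $\fre\subset\{|P|\le1\}$ is a maximum modulus argument: the continuous function $|P|$ attains its maximum over the compact set $\fre$ at some $z^*$; if that maximum were $>1$, then $z^*\notin\partial\fre$ (there $|P|\equiv 1$), so $z^*\in\intt\fre$, and the maximum modulus principle on a small disk about $z^*$ would force $P$ to be constant, contradicting $n\ge 1$. The reverse inclusion $\{|P|\le1\}\subset\fre$ is where the connectivity hypothesis is essential: if $z_0\in\{|P|\le1\}\setminus\fre$, then $z_0$ lies in the open, connected (hence path-connected), unbounded set $\bbC\setminus\fre$, so it can be joined inside $\bbC\setminus\fre$ to a point $z_1$ of large modulus with $|P(z_1)|>1$; applying the intermediate value theorem to $|P|$ along this path produces a point with $|P|=1$, i.e.\ a point of $P^{-1}(\partial\bbD)=\partial\fre\subset\fre$ lying in $\bbC\setminus\fre$, a contradiction (the case $|P(z_0)|=1$ being immediate, since then $z_0\in\partial\fre\subset\fre$). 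Hence $\fre=\{|P|\le1\}$.

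For the converse I would simply invoke Theorem \ref{T3.2} (equivalently, Theorem \ref{T1.2} applied to $P/\alpha$): if $\fre=\{z:|Q(z)|\le\alpha\}=S_\alpha$ with $\deg Q=n$ and leading coefficient $c$, then $T_n=c^{-1}Q$ and $\norm{T_n}_\fre=C(\fre)^n$, which is exactly $t_n=C(\fre)^n$.

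The only real point requiring care is the reverse inclusion $\{|P|\le1\}\subset\fre$ in the forward direction; this is precisely where the connectedness of $\bbC\setminus\fre$ enters, and it cannot be dropped (for instance $\fre=\partial\bbD$ satisfies the hypothesis of Theorem \ref{T1.2} with $P(z)=z$ yet is not a solid lemniscate). Beyond that, the main thing to state carefully — rather than to prove — is the degree constraint on the polynomial defining the solid lemniscate.
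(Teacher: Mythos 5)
Your proof is correct, and it follows the paper's overall strategy: reduce everything to Theorem \ref{T1.2}, and get the converse from Theorem \ref{T3.2}. The one place where you genuinely diverge is the identification $\fre=S_1$ in the forward direction. The paper dispatches this in two sentences of point-set topology: $O\partial\fre=L_\alpha$ forces the unbounded component of $\bbC\setminus\fre$ to be $\bbC\setminus S_\alpha$ (an unbounded connected open set disjoint from $L_\alpha$ with boundary $L_\alpha$ can only be the exterior of the lemniscate), and connectedness of $\bbC\setminus\fre$ then gives $\bbC\setminus\fre=\bbC\setminus S_\alpha$. You instead prove the two inclusions directly: $\fre\subset\{|P|\le1\}$ by the maximum modulus principle and $\{|P|\le1\}\subset\fre$ by path-connecting a putative exceptional point to infinity inside $\bbC\setminus\fre$ and applying the intermediate value theorem to $|P|$. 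Both arguments are elementary; yours is longer but more self-contained, in that it uses only the \emph{statement} of Theorem \ref{T1.2} plus first principles, whereas the paper's component argument implicitly leans on the inclusion $\fre\subset S_\alpha$ established inside the proof of Theorem \ref{T1.2}. Your remarks on degree bookkeeping (that ``solid lemniscate'' must mean one cut out by a polynomial of degree $n$, or at least of degree dividing $n$) and on the necessity of the connectivity hypothesis (e.g.\ $\fre=\partial\bbD$) are accurate and worth keeping.
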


\begin{remark}  It is fairly easy to prove Theorem \ref{T1.2} from this result.
\end{remark}

\begin{proof} By Theorem \ref{T1.2}, this is equivalent to showing that if $\bbC\setminus\fre$ is connected and $O\partial\fre=L_\alpha$, then $\fre = S_\alpha$.  To say that $O\partial\fre=L_\alpha$ means that the unbounded component of $\bbC\setminus\fre$ is $\bbC\setminus S_\alpha$.  If that is so and there is only one component, then $\bbC\setminus S_\alpha = \bbC\setminus\fre$ so $\fre=S_\alpha$.
\end{proof}

Here are other equivalences that are easy to check given our earlier arguments.

\begin{theorem} \lb{T3.6} $t_n = C(\fre)^n \iff \partial\frf_n \subset \fre$.
\end{theorem}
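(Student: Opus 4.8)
The plan is to observe that the set $\frf_n$ is nothing but the solid lemniscate $S_1$ attached to the polynomial $P=T_n/t_n$, so that the asserted equivalence is a direct instance of Lemma \ref{L3.4} combined with the capacity identity already proved in Theorem \ref{T3.1}. In other words, I would not prove anything new; I would only rephrase the content of the proof of Theorem \ref{T1.2} in terms of $\partial\frf_n$ rather than $O\partial\fre$.

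First I would set $P\equiv T_n/t_n$, a polynomial of degree $n$, and note that in the notation of \eqref{3.6} one has $\frf_n=\{z\,|\,|P(z)|\le 1\}=S_1$, hence by \eqref{3.6A} the topological boundary satisfies $\partial\frf_n=L_1=\{z\,|\,|P(z)|=1\}$. (The only thing to check here is routine and follows from the maximum modulus principle applied to the nonconstant polynomial $P$, which has no zeros on $L_1$: no point of $\{|P|=1\}$ can lie in $\intt S_1$, and no point of $\partial S_1$ can have $|P|<1$.) Next, Theorem \ref{T3.1}(b) gives $C(\frf_n)=t_n^{1/n}$, i.e.\ $C(S_1)=t_n^{1/n}$, and \eqref{3.2} gives $\fre\subset\frf_n=S_1$. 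With these two identifications in hand, I would simply chain the equivalences
\[
t_n=C(\fre)^n \iff C(\fre)=C(S_1) \iff L_1\subset\fre \iff \partial\frf_n\subset\fre,
\]
where the outer steps are the two facts just recorded and the middle step is exactly \eqref{3.12} of Lemma \ref{L3.4} with $\alpha=1$.

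I do not expect any real obstacle: the statement is a cosmetic reformulation of (one half of) Theorem \ref{T1.2}, trading the condition $O\partial\fre=P^{-1}(\partial\bbD)$ for the more symmetric-looking $\partial\frf_n\subset\fre$. If I wanted the note to be fully self-contained I would add the short remark reconciling the two conditions: since $\fre\subset\frf_n$, the complement $\bbC\setminus\frf_n$ lies in the unbounded component of $\bbC\setminus\fre$, while $\partial\frf_n\subset\fre$ prevents any point of $\partial\frf_n$ from lying in that component; hence the unbounded component of $\bbC\setminus\fre$ is precisely $\bbC\setminus\frf_n$, so $O\partial\fre=\partial\frf_n=L_1=P^{-1}(\partial\bbD)$, exactly the hypothesis of Theorem \ref{T1.2}.
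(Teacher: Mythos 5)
Your proposal is correct and is exactly the argument the paper has in mind (the paper states Theorem \ref{T3.6} without proof, calling it ``easy to check given our earlier arguments''): identify $\frf_n$ with the solid lemniscate $S_1$ for $P=T_n/t_n$, use Theorem \ref{T3.1} to get $C(S_1)=t_n^{1/n}$ and $\fre\subset S_1$, and apply Lemma \ref{L3.4}. The extra checks you include (that $\partial\frf_n=L_1$ via the open mapping principle, and the reconciliation with the $O\partial\fre$ formulation of Theorem \ref{T1.2}) are sound and only make the routine verification explicit.
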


\begin{theorem} \lb{T3.7} $t_n = C(\fre)^n$ if and only if there is a polynomial, $P$, and $\alpha>0$ so that $L_\alpha \subset\fre\subset S_\alpha$.
\end{theorem}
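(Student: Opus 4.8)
The plan is to read off Theorem~\ref{T3.7} from results already in hand: the forward implication from Theorem~\ref{T3.6} together with the inclusion $\fre\subset\frf_n$ of \eqref{3.2}, and the reverse implication from the same sandwich argument used to prove Theorem~\ref{T1.2}, i.e.\ Lemma~\ref{L3.4} combined with Szeg\H{o}'s bound \eqref{1.3} and the Chebyshev identification in Theorem~\ref{T3.2}. Throughout, the polynomial $P$ is understood to have degree $n$, so that $S_\alpha=\{z:|P(z)|\le\alpha\}$ is a solid lemniscate of the relevant degree and Theorem~\ref{T3.2} applies directly.

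For the forward direction, assume $t_n=C(\fre)^n$. Take $P=T_n/t_n$, which has degree $n$, and $\alpha=1$. As observed in the proof of Theorem~\ref{T1.2}, with this choice $\frf_n=S_1$, and by \eqref{3.6A} its boundary is $\partial\frf_n=\partial S_1=L_1$. Now \eqref{3.2} gives $\fre\subset\frf_n=S_1$, while Theorem~\ref{T3.6}, applied under the standing hypothesis $t_n=C(\fre)^n$, gives $L_1=\partial\frf_n\subset\fre$. Hence $L_1\subset\fre\subset S_1$, which is the required conclusion (with this $P$ and $\alpha=1$).

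For the reverse direction, suppose there are a degree $n$ polynomial $P$, with leading coefficient $c$, and an $\alpha>0$ with $L_\alpha\subset\fre\subset S_\alpha$. From $\fre\subset S_\alpha$ and $L_\alpha\subset\fre$, Lemma~\ref{L3.4} yields $C(\fre)=C(S_\alpha)$. By Theorem~\ref{T3.2}, $Q:=c^{-1}P$ is monic of degree $n$ with $\norm{Q}_{S_\alpha}=C(S_\alpha)^n$, so, since $\fre\subset S_\alpha$,
\begin{equation*}
  \norm{Q}_\fre\le\norm{Q}_{S_\alpha}=C(S_\alpha)^n=C(\fre)^n .
\end{equation*}
As $Q$ is a monic polynomial of degree $n$, this gives $t_n=\norm{T_n}_\fre\le\norm{Q}_\fre\le C(\fre)^n$, while Szeg\H{o}'s bound \eqref{1.3} gives the reverse inequality. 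Hence $t_n=C(\fre)^n$.

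There is no serious obstacle here: every ingredient has already been established, and the content of the theorem is essentially that both earlier characterizations (via $\frf_n$ in Theorem~\ref{T3.6}, and via solid lemniscates in Theorem~\ref{T1.2}) can be repackaged as a two-sided inclusion. The only points needing a little care are the bookkeeping identity $\partial\frf_n=L_1$ for $P=T_n/t_n$ (so that ``$\partial\frf_n\subset\fre$'' becomes ``$L_1\subset\fre$''), and the understanding that $P$ has degree exactly $n$; with a lower-degree $P$ the sandwich argument would only yield equality in \eqref{1.3} along multiples of $\deg P$.
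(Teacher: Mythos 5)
Your proof is correct and is exactly the argument the paper intends: Theorem \ref{T3.7} is stated there without proof as one of the ``equivalences that are easy to check given our earlier arguments,'' and you check it using precisely those ingredients (the identification $\frf_n=S_1$ for $P=T_n/t_n$, the inclusion \eqref{3.2}, Lemma \ref{L3.4}, Theorem \ref{T3.2}, and Szeg\H{o}'s bound). Your remark that $P$ must be read as having degree exactly $n$ is a worthwhile clarification of the statement, not a deviation from the paper's route.
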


\section{Equality in a Totik--Widom Upper Bound} \lb{s3A}

In \cite{CSZ1}, we dubbed an upper bound of the form $\norm{T_n}_\fre \le QC(\fre)^n$ a Totik--Widom bound after Widom \cite{Widom} and Totik \cite{Totik09} who proved it when $\fre \subset\bbR$ is a finite gap set.  In that paper, we proved that
\begin{equation}\label{3A.1}
  \norm{T_n}_\fre \le 2\exp(PW(\fre)) C(\fre)^n
\end{equation}
where $PW(\fre) = \sum_{w\in\calC} G_\fre(w)$ with $\calC$ the set of critical points (in $\bbC$) of $G_\fre$ (when $\fre\in\bbR$, they lie in $\bbR$).  PW stands for Parreau--Widom who singled out sets with $PW(\fre) < \infty$ in \cite{Parreau, Widom71}.  We'll call sets that are regular for potential theory and obey this condition, PW sets.  Our main goal in this section is to discuss when one has equality in this bound.

Since we want to say something about a formula for $\frf_n$, we recall the proof in a more general context, beginning with

\begin{proposition} \lb{P3A.1} Let $\fre \subset \frg$ be two compact subsets of $\bbC$ with positive capacity and let $\rho_\frg$ (resp. $\rho_\fre$) be the potential theoretic equilibrium measure for $\frg$ (resp. $\fre$).  Then
\begin{equation}\label{3A.2}
  \log\left(\frac{C(\frg)}{C(\fre)}\right) = \int G_\fre(z) d\rho_\frg(z)
\end{equation}
\end{proposition}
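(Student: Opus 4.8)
The plan is to reduce everything to the symmetry of the mutual logarithmic energy. Write $U^\mu(z) = \int \log(|z-w|^{-1})\,d\mu(w)$ for the logarithmic potential of a finite positive measure $\mu$, so that $\calE(\mu) = \int U^\mu\,d\mu$ in the notation of \eqref{2.1}. I will use three standard facts of potential theory (see, e.g., \cite{Ransford}): (i) Frostman's theorem, which gives $U^{\rho_\fre}(z) = -\log C(\fre)$ for quasi-every $z \in \fre$, and likewise $U^{\rho_\frg}(z) = -\log C(\frg)$ for q.e.\ $z \in \frg$; (ii) the representation of the Green's function $G_\fre(z) = -\log C(\fre) - U^{\rho_\fre}(z)$, valid for q.e.\ $z \in \bbC$ (the right side is harmonic on $\bbC\setminus\fre$ with the correct behaviour $\log|z| - \log C(\fre) + \oh(1)$ at infinity, and both sides vanish q.e.\ on $\fre$); and (iii) the fact that an equilibrium measure, having finite energy, puts no mass on any polar set.

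First I would integrate the identity in (ii) against $d\rho_\frg$; since that identity holds off a polar set and $\rho_\frg$ ignores polar sets by (iii), this gives
\[
  \int G_\fre(z)\,d\rho_\frg(z) = -\log C(\fre) - \int U^{\rho_\fre}(z)\,d\rho_\frg(z).
\]
Next, since $\log(|z-w|^{-1})$ is bounded below on the compact set $\supp\rho_\fre \times \supp\rho_\frg$, Tonelli's theorem justifies interchanging the order of integration in the double integral $\int U^{\rho_\fre}(z)\,d\rho_\frg(z) = \iint \log(|z-w|^{-1})\,d\rho_\fre(w)\,d\rho_\frg(z)$, so that
\[
  \int U^{\rho_\fre}(z)\,d\rho_\frg(z) = \int U^{\rho_\frg}(w)\,d\rho_\fre(w).
\]
Finally, because $\supp\rho_\fre \subset \fre \subset \frg$ and $\rho_\fre$ assigns zero mass to the polar exceptional set in (i), we get $U^{\rho_\frg}(w) = -\log C(\frg)$ for $\rho_\fre$-a.e.\ $w$, hence $\int U^{\rho_\frg}\,d\rho_\fre = -\log C(\frg)$. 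Combining the three displays yields $\int G_\fre\,d\rho_\frg = -\log C(\fre) + \log C(\frg) = \log\bigl(C(\frg)/C(\fre)\bigr)$, which is \eqref{3A.2}.

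The only delicate point is the measure-theoretic bookkeeping: each pointwise identity used above holds merely quasi-everywhere, so one must keep track of the fact that both $\rho_\fre$ and $\rho_\frg$ annihilate polar sets before integrating against them. Granting this, together with the elementary Tonelli interchange, the argument is a short computation. One could alternatively avoid any discussion of regularity of $\fre$ or $\frg$ altogether by taking the potential-theoretic formula in (ii) as the defining relation one works with throughout, which is legitimate since only $\rho_\fre$- and $\rho_\frg$-almost-everywhere statements enter the computation.
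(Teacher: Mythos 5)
Your argument is correct, but it takes a genuinely different route from the paper's. The paper sets $h = G_\fre - G_\frg$, notes that $h$ is harmonic and bounded on $(\bbC\cup\{\infty\})\setminus\frg$ with $h(\infty)=\log\left(\frac{C(\frg)}{C(\fre)}\right)$ by comparing the two Green's function asymptotics, and then invokes the identification of $\rho_\frg$ with harmonic measure at infinity to write $h(\infty)=\int h\,d\rho_\frg = \int G_\fre\,d\rho_\frg$, the last equality because $G_\frg$ vanishes q.e.\ on $\frg$ and hence $\rho_\frg$-a.e. You instead substitute the representation $G_\fre = -\log C(\fre) - U^{\rho_\fre}$, use the Fubini/Tonelli symmetry of the mutual energy $\iint\log(|z-w|^{-1})\,d\rho_\fre\,d\rho_\frg$, and finish with Frostman's theorem for $\frg$ integrated against $\rho_\fre$. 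The two proofs lean on different standard inputs: the paper needs the Dirichlet-problem statement that $\rho_\frg$ is harmonic measure at $\infty$ (its \cite[Corollary 3.6.28]{HA}), whereas you need Frostman's theorem plus the elementary interchange of integrals, which makes the reciprocity between $\int G_\fre\,d\rho_\frg$ and $\int U^{\rho_\frg}\,d\rho_\fre$ explicit and is arguably more self-contained. Your bookkeeping is the right one --- both equilibrium measures have finite energy and so annihilate the polar exceptional sets in Frostman's theorem and in the Green's function representation --- and the single place where the hypothesis $\fre\subset\frg$ enters your argument ($\supp\rho_\fre\subset\frg$, so Frostman's identity for $\frg$ holds $\rho_\fre$-a.e.) plays exactly the role of the paper's observation that $h\restriction\frg = G_\fre$ q.e.
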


\begin{remark}  Since $G_\fre(z) \ge 0$, this implies that $C(\frg) = C(\fre)$ if and only if $G_\fre(z) = 0$ for $\rho_\frg$-a.e. $z$ in $\supp(\rho_\frg)$.  Since $G_\fre(z)=0 \Rightarrow z\in\fre$ and $G_\fre(z)=0$ for q.e. $z \in \fre$, this happens if and only if $\supp(\rho_\frg) \subset\fre$.  This gives an alternate proof of Proposition \ref{P2.1}
\end{remark}

\begin{proof} It is well-known \cite[Theorem 3.6.8]{HA} that near $z=\infty$, we have that $G_\frf(z)= \log|z|-\log(C(\frf))+\mbox{O}(1/z)$.  Let $h(z) \equiv G_\fre(z)-G_\frg(z)$ and note that
\begin{equation}\label{3A.3}
  h(z)=\log\left(\frac{C(\frg)}{C(\fre)}\right)+\mbox{O}(1/z)
\end{equation}
near $\infty$. Thus $h$ is harmonic on $\bbC\setminus\frg$ and bounded near infinity, so harmonic there.  It is known \cite[Corollary 3.6.28]{HA} that $d\rho_\frg$ is not just the equilibrium measure but it is harmonic measure at $\infty$ in the sense that if $H(z)$ is harmonic and bounded on $(\bbC\cup\{\infty\})\setminus\frg$ with q.e. boundary values on $\partial\frg$, then
\begin{equation}\label{3A.4}
  H(\infty)= \int H(z) d\rho_\frg(z)
\end{equation}
Taking $H=h$ and noting that q.e., $h\restriction \frg = G_\fre$, we get \eqref{3A.2} from \eqref{3A.3}
\end{proof}

\begin{theorem} \lb{T3A.2} \emph{(a)} For any compact $\fre \subset\bbR$,
\begin{equation}\label{3A.5}
  \norm{T_n}_\fre = 2 C(\fre)^n \exp\left(n\int G_\fre(x) \, d\rho_{\fre_n}(x)\right)
\end{equation}
\emph{(b)} For any compact $\frf \subset \bbC$
\begin{equation}\label{3A.6}
  \norm{T_n}_\frf = C(\frf)^n \exp\left(n\int G_\frf(z) \, d\rho_{\frf_n}(z)\right)
\end{equation}
\end{theorem}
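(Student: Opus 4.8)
The plan is to combine Proposition \ref{P3A.1} with the capacity identities already in hand --- \eqref{2.4} in the real case and \eqref{3.3} in the complex case. Once this is noticed, both parts collapse to a one-line computation, so the only substantive choice is which inclusion of compact sets and which auxiliary set to feed into Proposition \ref{P3A.1}.

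For part (b), I would start from the inclusion $\frf \subset \frf_n$ of \eqref{3.2} together with the identity $\norm{T_n}_\frf = t_n = C(\frf_n)^n$ from \eqref{3.3}. Both sets have positive capacity --- that of $\frf$ by hypothesis, that of $\frf_n$ because it contains $\frf$ --- so Proposition \ref{P3A.1} applies to the pair $\frf \subset \frf_n$ and yields
\[
  \log\!\left(\frac{C(\frf_n)}{C(\frf)}\right) = \int G_\frf(z)\, d\rho_{\frf_n}(z).
\]
Exponentiating, raising to the $n$-th power, and using $t_n = C(\frf_n)^n$ gives \eqref{3A.6}. Part (a) is the same argument with $\frf$ replaced by $\fre$, $\frf_n$ replaced by $\fre_n = T_n^{-1}([-t_n,t_n])$, the inclusion \eqref{2.3} in place of \eqref{3.2}, and the identity $t_n = 2C(\fre_n)^n$ of \eqref{2.4} in place of \eqref{3.3}; the extra factor of $2$ is carried through unchanged and produces \eqref{3A.5}.

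I do not expect a genuine obstacle here; the content is entirely in the earlier results. The one point deserving a line of care is that Proposition \ref{P3A.1} needs both sets to have positive capacity: for $\fre_n$ and $\frf_n$ this is automatic (they contain $\fre$, $\frf$), and if $\fre$ (resp.\ $\frf$) were polar the asserted formula would be vacuous since $G_\fre$ (resp.\ $G_\frf$) is then undefined, so positive capacity is tacitly assumed throughout. One could equally derive the same formulas from the potential-energy computation in the proof of Proposition \ref{P2.1}, but routing through Proposition \ref{P3A.1} makes the factor $\exp\!\big(n\int G\,d\rho\big)$ appear immediately.
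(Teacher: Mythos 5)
Your proposal is correct and is exactly the paper's argument: the paper's proof of Theorem \ref{T3A.2} is the one-liner ``Immediate from \eqref{2.4}, \eqref{3.3} and \eqref{3A.2},'' i.e.\ Proposition \ref{P3A.1} applied to the pairs $\fre\subset\fre_n$ and $\frf\subset\frf_n$ combined with the capacity identities for $\fre_n$ and $\frf_n$. You have simply written out the same computation in detail.
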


\begin{remark} (a) is from \cite{CSZ1}; (b) is new although the proof closely follows the proof of (a) in \cite{CSZ1}.
\end{remark}

\begin{proof}  Immediate from \eqref{2.4}, \eqref{3.3} and \eqref{3A.2}.
\end{proof}

The following restates the proof of \eqref{3A.1} from \cite{CSZ1} and answers the question of when equality holds.

\begin{theorem} \lb{T3A.3}  \eqref{3A.1} holds and if for some $\fre\subset\bbR$ and $n$, we have equality in \eqref{3A.1}, the $\fre$ is an interval.
\end{theorem}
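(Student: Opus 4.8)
\emph{Setting up.} By Theorem~\ref{T3A.2}(a) we have $\norm{T_n}_\fre = 2C(\fre)^n\exp\!\bigl(n\int G_\fre\,d\rho_{\fre_n}\bigr)$, so \eqref{3A.1} is equivalent to the inequality
\[
 n\int G_\fre(x)\,d\rho_{\fre_n}(x)\ \le\ PW(\fre)=\sum_{w\in\calC}G_\fre(w),
\]
and equality in \eqref{3A.1} is equivalent to equality here. Since $G_\fre$ vanishes on $\fre\subset\fre_n$, the integral runs over $\fre_n\setminus\fre$, a set that is relatively open in $\fre_n$ and that lies in the complementary intervals of $\fre$. On a bounded gap $(a_j,b_j)$ of $\fre$ the function $G_\fre$ is strictly concave, because $G_\fre''(x)=-\int(x-t)^{-2}\,d\rho_\fre(t)<0$; hence it has a single maximum there, attained at the unique critical point $c_j\in\calC\cap(a_j,b_j)$, and $G_\fre(c_j)>0$ when $\fre$ is regular. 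On the two unbounded complementary intervals $G_\fre$ is strictly monotone ($G_\fre'(x)=\int(x-t)^{-1}\,d\rho_\fre(t)\ne 0$ there) and carries no critical point, so $\calC$ has exactly one point in each bounded gap.

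\emph{The bound.} Following \cite{CSZ1}, one combines the explicit density \eqref{2.7} of $\rho_{\fre_n}$ with the band structure of $\fre_n=T_n^{-1}([-t_n,t_n])$: each band $\beta$ of $\fre_n$ carries $\rho_{\fre_n}$-mass (number of zeros of $T_n$ in $\beta$)$/n$, and the contribution of $\beta\cap(\fre_n\setminus\fre)$ to $n\int G_\fre\,d\rho_{\fre_n}$ is estimated by replacing $G_\fre$, on each complementary interval of $\fre$ that $\beta$ meets, by the largest value $G_\fre$ takes there — the value $G_\fre(c_j)$ on a bounded gap, a boundary value on an unbounded one — and bounding the relevant mass. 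Summing over the bands, these contributions add up to at most $\sum_{w\in\calC}G_\fre(w)=PW(\fre)$; this is \eqref{3A.1}. I expect this step to be the main obstacle: the delicate cases are the bands of $\fre_n$ that straddle endpoints of gaps of $\fre$ and the fact that $\fre_n$ may protrude slightly beyond $[\min\fre,\max\fre]$, so that the mass–versus–pointwise bookkeeping must be done carefully in order to recover exactly $PW(\fre)$ (and not a larger constant).

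\emph{Equality.} Now suppose $\norm{T_n}_\fre=2\exp(PW(\fre))C(\fre)^n$ for some $\fre\subset\bbR$ and some $n$. Then $PW(\fre)<\infty$, so $\fre$ is a (regular) PW set, and by Theorem~\ref{T3A.2}(a) the hypothesis becomes $n\int G_\fre\,d\rho_{\fre_n}=PW(\fre)$; hence every estimate in the argument just recalled is sharp. In particular, on $\fre_n\setminus\fre$ the step‑valued majorant used there — which on each complementary interval of $\fre$ is a single value of $G_\fre$ (a critical value $G_\fre(c_j)$, or a boundary value $0$) — must agree with $G_\fre$ itself $\rho_{\fre_n}$-almost everywhere. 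But $G_\fre$ is real analytic and non‑constant (strictly concave) on every bounded gap, so it equals $G_\fre(c_j)$ only at $c_j$, and it is nowhere $0$ on the complementary intervals; since $\rho_{\fre_n}$ is absolutely continuous by \eqref{2.7}, this forces $\rho_{\fre_n}(\fre_n\setminus\fre)=0$. As $\supp\rho_{\fre_n}=\fre_n$ by Lemma~\ref{L2.2} and $\fre_n\setminus\fre$ is relatively open in $\fre_n$, we conclude $\fre_n\setminus\fre=\varnothing$, i.e.\ $\fre_n=\fre$. Then $PW(\fre)=n\int_{\varnothing}G_\fre\,d\rho_{\fre_n}=0$, so $G_\fre$ has no critical points in $\bbC$, i.e.\ $\fre$ has no bounded complementary components; being compact, $\fre$ is an interval. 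Thus, once the Totik--Widom estimate is in hand with its pointwise majorant identified, the equality analysis is soft, using only strict concavity of $G_\fre$ in the gaps, absolute continuity and full support of $\rho_{\fre_n}$, and regularity of $\fre$.
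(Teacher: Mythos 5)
Your reduction via Theorem~\ref{T3A.2}(a), your identification of the critical points (one per bounded gap, by strict concavity of $G_\fre$ there), and your use of absolute continuity of $\rho_{\fre_n}$ in the equality analysis all match the paper's proof in spirit. But the step you yourself flag as ``the main obstacle'' is a genuine gap as written: you never actually establish the per-gap bound, and your worry about bands straddling gap endpoints and about $\fre_n$ protruding beyond $[\min\fre,\max\fre]$ shows you are missing the two inputs that make the bookkeeping trivial. These are exactly what the paper cites from \cite[Theorem 2.4]{CSZ1}: (i) $\fre_n$ is contained in the convex hull of $\fre$, so $\fre_n\setminus\fre$ lies entirely in the bounded gaps (no protrusion, and the unbounded complementary intervals contribute nothing); and (ii) for each gap $K$ one has $\rho_{\fre_n}(\fre_n\cap K)\le 1/n$, with $\fre_n\setminus\fre$ meeting $K$ in at most one interval. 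With these, the straddling issue disappears --- you only ever integrate over $K$ itself --- and
\[
 n\int_K G_\fre\,d\rho_{\fre_n} \;<\; G_\fre(w_K)
\]
holds \emph{strictly} for every gap $K$: if $\rho_{\fre_n}(K)>0$ this follows from $G_\fre<G_\fre(w_K)$ off the single point $w_K$ together with absolute continuity of $\rho_{\fre_n}$ (from \eqref{2.7}), and if $\rho_{\fre_n}(K)=0$ it is immediate since $G_\fre(w_K)>0$. Summing over gaps gives \eqref{3A.1}.

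The strictness just noted also makes your equality analysis longer than necessary: you do not need to first deduce $\fre_n=\fre$ and then argue $PW(\fre)=0$. Since the inequality for each individual gap is strict regardless of how much $\rho_{\fre_n}$-mass the gap carries, equality in \eqref{3A.1} is impossible as soon as a single bounded gap exists; hence $\fre$ has no gaps and, being compact, is an interval. Your longer route is not wrong, but it leans on the ``pointwise majorant must be attained a.e.''\ formulation, which again presupposes the unproved bound. In short: supply (i) and (ii) from \cite[Theorem 2.4]{CSZ1} and the argument closes, in essentially the paper's form.
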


\begin{proof} $\fre_n\setminus\fre$ consists of some number of intervals in the gaps of $\fre$, at most one per gap \cite[Theorem 2.4]{CSZ1} and $\rho_{\fre_n}$ is a purely a.c. measure \cite[Theorem 2.3]{CSZ1}.  In each gap, $K$, there is a single critical point, $w_K$ of $G_\fre$ and these are all the critical points.  Moreover, in each gap, $G_\fre$ is strictly concave so $G_\fre$ takes its maximum value for the gap exactly at the single point $w_K$.  Moreover, $\rho_{\fre_n}(\fre_n\cap K) \le 1/n$ \cite[Theorem 2.4]{CSZ1}, so $\int_K G_\fre(x) \, d\rho_{\fre_n} < G_\fre(w_K) /n$ since $d\rho_{\fre_n}$ is absolutely continuous.  \eqref{3A.1} follows by summing over gaps and we only get equality in \eqref{3A.1} if there are no gaps in $\fre$, i.e. if $\fre$ is a closed interval.
\end{proof}

We can also answer when equality in the upper or lower bound occurs asymptotically along a subsequence.  In our paper with Yuditskii \cite{CSYZ2}, we focused
on subsequences $\{n_j\}_{j=1}^\infty$ where the zeros of $T_{n_j}$ in gaps had limits.  There is at most one zero in each gap, $K$ \cite[Theorem 2.3]{CSZ1}.
Let $\calG$ denote the set of all gaps of $\fre$, i.e. bounded components of $\bbR\setminus\fre$.   In \cite{CSYZ2}, we defined what we called a gap collection,
a subset $\calG_0 \subset \calG$ and for each $K\in\calG_0$, a point $x_K\in K$.  We considered subsequences, $T_{n_j}$, so that for $K\in\calG\setminus\calG_0$, as $n_j \to\infty$, either $T_{n_j}$ has no zero in $K$ or the zero goes to the one of the two edges of $K$ and so that for $K\in\calG_0$, there is a zero for large $n_j$ which goes to $x_K$ as $n_j \to\infty$.  This describes all possible limit points of the set of zeros.

\begin{theorem} \lb{T3A.4} Fix $\fre\subset\bbR$, a compact set obeying the PW condition, and a subsequence with an associated limiting zero gap collection, $\calG_0$ and $\{x_K\}_{K\in\calG_0}$.  Then
\begin{equation}\label{3A.6A}
  \lim_{j \to \infty} \norm{T_{n_j}}_{\fre}/C(\fre)^{n_j}=2 \exp\left(\sum_{K\in\calG_0} G_\fre(x_K)\right)
\end{equation}
\end{theorem}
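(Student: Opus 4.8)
The plan is to read the theorem off the exact identity \eqref{3A.5}. That formula rewrites the quantity of interest as
\[
  \frac{\norm{T_{n_j}}_\fre}{C(\fre)^{n_j}} = 2\exp\!\Big(n_j\int G_\fre(x)\,d\rho_{\fre_{n_j}}(x)\Big),
\]
so \eqref{3A.6A} is equivalent to
\[
  \lim_{j\to\infty}\, n_j\int G_\fre(x)\,d\rho_{\fre_{n_j}}(x) = \sum_{K\in\calG_0} G_\fre(x_K).
\]
Since $\fre$ is regular for potential theory, $G_\fre$ vanishes on $\fre$ and is continuous on $\bbC$; and by \cite[Theorem 2.4]{CSZ1}, $\fre_n\setminus\fre$ is a disjoint union of intervals $B_{n,K} := \fre_n\cap K$, at most one in each gap $K\in\calG$, each with $\rho_{\fre_n}(B_{n,K})\le 1/n$. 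Hence $n\int G_\fre\,d\rho_{\fre_n} = \sum_{K\in\calG} a_{n,K}$ with $a_{n,K} := n\int_{B_{n,K}} G_\fre\,d\rho_{\fre_n}\ge 0$.

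The next step is a dominated-convergence argument over the gap collection $\calG$. On each gap $K$, $G_\fre$ is strictly concave with maximum $G_\fre(w_K)$ at the unique critical point $w_K\in K$ of $G_\fre$, so the mass bound gives $0\le a_{n,K}\le G_\fre(w_K)$. Since $\fre$ is a PW set, $\sum_{K\in\calG} G_\fre(w_K) = PW(\fre) < \infty$ is a summable majorant independent of $n$. Therefore it suffices to show, for each fixed gap $K$ and along the chosen subsequence, that $a_{n_j,K}\to G_\fre(x_K)$ when $K\in\calG_0$ and $a_{n_j,K}\to 0$ when $K\in\calG\setminus\calG_0$, and then to sum.

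The content lies in the asymptotics of the band $B_{n_j,K}$ and of the probability measure $n_j\,d\rho_{\fre_{n_j}}\restriction B_{n_j,K}$. Two facts are needed. First, the bands collapse to points: if some $B_{n_j,K}$ contained a fixed nondegenerate closed interval $J$ in the interior of $K$ for infinitely many $j$, then $|T_{n_j}|\le t_{n_j}$ on $\fre\cup J$, so \eqref{1.3} gives $t_{n_j}\ge C(\fre\cup J)^{n_j}$ while $C(\fre\cup J)>C(\fre)$ by Proposition \ref{P2.1} (the new component $J$ carries positive equilibrium mass), contradicting $t_n^{1/n}\to C(\fre)$. Since $B_{n,K}$ contains the zero of $T_n$ in $K$ when there is one (at most one, by \cite[Theorem 2.3]{CSZ1}) and otherwise the interior minimum of $|T_n|$ on $K$, and since that interior minimum cannot linger in the interior of $K$ --- where the equidistribution of the zeros of $T_n$ analysed in \cite{CSYZ2} forces $\tfrac1n\log|T_n|$ up toward $G_\fre+\log C(\fre) > \log C(\fre)$, hence $|T_n|$ above $t_n$ --- one concludes that $B_{n_j,K}$ shrinks to $\lim_j z_{n_j,K}$ if $T_{n_j}$ has a zero $z_{n_j,K}\in K$, and to a point of $\partial\fre$ otherwise. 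Second, when $B_{n,K}$ straddles the zero of $T_n$ and does not touch $\partial\fre$, the polynomial $\Delta_n$ of \eqref{2.5} runs monotonically from $2$ to $-2$ across $B_{n,K}$, so by \eqref{2.7} its $\rho_{\fre_n}$-mass is exactly $1/n$. Granting these: if $K\in\calG_0$ then $z_{n_j,K}\to x_K$ with $x_K$ interior to $K$, so for large $j$ the shrinking band $B_{n_j,K}$ avoids $\partial\fre$, carries mass $1/n_j$, and collapses to $\{x_K\}$; hence $a_{n_j,K} = \int G_\fre\,d(n_j\rho_{\fre_{n_j}}\restriction B_{n_j,K})\to G_\fre(x_K)$ by continuity of $G_\fre$. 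If $K\in\calG\setminus\calG_0$, then either $B_{n_j,K}$ is eventually empty or it collapses to a point of $\partial\fre$, whence $\sup_{B_{n_j,K}} G_\fre\to 0$ and $a_{n_j,K}\le \sup_{B_{n_j,K}} G_\fre\to 0$. Summing over $\calG$ as in the second paragraph finishes the proof.

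I expect the main obstacle to be the band analysis of the third paragraph: the precise control of how the gap bands $B_{n,K}$ shrink and, in particular, ruling out a band with no zero from lingering near an interior point of a gap (equivalently, that the rescaled equilibrium measure on $B_{n,K}$ cannot escape a shrinking neighbourhood of $x_K$ when $K\in\calG_0$). The collapse mechanism itself is clean --- Proposition \ref{P2.1} together with the equality of the Chebyshev constant and the capacity --- but the identification of the limit points rests on the zero-distribution results of \cite{CSYZ2}. Everything else --- the identity \eqref{3A.5}, the vanishing of $G_\fre$ on $\fre$, the splitting over gaps, the PW-summable domination, and the final passage to the limit --- is routine.
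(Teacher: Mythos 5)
Your proof follows the same skeleton as the paper's: start from the identity \eqref{3A.5}, split the integral over the gaps, dominate gap-by-gap by $G_\fre(w_K)$ (summable by the PW hypothesis) to interchange limit and sum, and then compute the per-gap limits. The only real divergence is in the last step: the paper simply invokes \cite[Theorem 5.1]{CSZ1}, which already asserts that for $K\in\calG_0$ there is eventually a single exponentially small band of $\fre_{n_j}$ inside $K$ containing $x_K$ with $\rho_{\fre_{n_j}}$-mass exactly $1/n_j$, and for $K\notin\calG_0$ uses only $\rho_{\fre_{n_j}}(K)\le 1/n_j$ together with $G_\fre\to 0$ at the gap edges. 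You instead try to re-derive the band localization from scratch. Your capacity argument for the collapse of the bands is fine, but the step where you rule out a zero-free band lingering at an interior point of $K$ by appealing to ``equidistribution of zeros from \cite{CSYZ2}'' is the one soft spot, and it is also unnecessary: a band of $\fre_{n_j}$ that lies strictly in the interior of a gap is a full connected component of $T_{n_j}^{-1}([-t_{n_j},t_{n_j}])$, hence is mapped onto $[-t_{n_j},t_{n_j}]$ by $T_{n_j}$ and must contain a zero; so a zero-free piece of $\fre_{n_j}\cap K$ is necessarily attached to an edge of $K$ and collapses there, where $G_\fre$ vanishes by regularity. With that replacement (or simply by citing \cite[Theorem 5.1]{CSZ1} as the paper does), your argument is complete.
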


\begin{proof} For any $K\in\calG$ and any $j$, define
\begin{equation}\label{3A.7}
  v_j(K) = n_j\int_K G_\fre(x) \, d\rho_{\fre_{n_j}}(x)
\end{equation}
and define
\begin{equation}\label{3A.8}
  V(K) = \sup_{x\in K} G_\fre(x) = G_\fre(w_K)
\end{equation}

Since, by the PW hypothesis, $V(K)$ is summable and $v_j(K) \le V(K)$, the dominated convergence theorem implies that
\begin{equation}\label{3A.9}
  \lim_{j\to\infty} \sum_{K\in\calG} v_j(K) = \sum_{K\in\calG} \lim_{j\to\infty} v_j(K)
\end{equation}

If $K\in\calG\setminus\calG_0$, since $\rho_{\fre_{n_j}}(K) \le 1/n_j$ \cite[Theorem 2.4]{CSZ1} and $G_\fre\to 0$ at the edges, $v_j(K)\to 0$.

If $K\in\calG_0$, by \cite[Theorem 5.1]{CSZ1}, there is for $j$ large a single, exponentially small band of $\fre_{n_j}$ entirely in $K$ with $x_K$ in the band
and $\rho_{\fre_{n_j}}(K)=1/n_j$.  It follows that $v_j(K) \to G_\fre(x_K)$.  Thus, by \eqref{3A.9}, $\sum_{K\in\calG} v_j(K) \to \sum_{K\in\calG_0} G_\fre(x_K)$.  By \eqref{3A.5}, we get \eqref{3A.6A}.
\end{proof}

\begin{corollary} \lb{C3A.5}  Fix $\fre\subset\bbR$, a compact set obeying the PW condition, and a subsequence with an associated limiting zero gap collection, $\calG_0$ and $\{x_K\}_{K\in\calG_0}$.  Then

\emph{(a)} If $\calG_0$ is empty, we have
\begin{equation}\label{3A.10}
  \lim_{j \to \infty} \norm{T_{n_j}}_{\fre}/C(\fre)^{n_j} = 2
\end{equation}

\emph{(b)} If $\calG_0=\calG$ and, for each $K$, $x_K=w_K$, the critical point in the gap, we have
\begin{equation}\label{3A.11}
  \lim_{j \to \infty} \norm{T_{n_j}}_{\fre}/C(\fre)^{n_j}=2\exp(PW(\fre))
\end{equation}
\end{corollary}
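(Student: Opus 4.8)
The plan is to obtain both statements as immediate specializations of Theorem \ref{T3A.4}, which already delivers the exact value of the limit in terms of the limiting zero gap collection. Concretely, Theorem \ref{T3A.4} asserts that, under the stated PW hypothesis and for a subsequence with associated data $\calG_0$ and $\{x_K\}_{K\in\calG_0}$, one has $\lim_{j\to\infty}\norm{T_{n_j}}_\fre/C(\fre)^{n_j}=2\exp\bigl(\sum_{K\in\calG_0}G_\fre(x_K)\bigr)$ (equation \eqref{3A.6A}). So the only remaining task is to evaluate this sum in the two extreme cases.

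For part (a), when $\calG_0=\emptyset$ the sum $\sum_{K\in\calG_0}G_\fre(x_K)$ is an empty sum, hence equal to $0$, and the right-hand side of \eqref{3A.6A} collapses to $2\exp(0)=2$, which is exactly \eqref{3A.10}.

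For part (b), when $\calG_0=\calG$ and $x_K=w_K$ for every gap $K$, the sum in \eqref{3A.6A} becomes $\sum_{K\in\calG}G_\fre(w_K)$. Here I would invoke the structural fact already used in the proof of Theorem \ref{T3A.3}: for $\fre\subset\bbR$ the critical points of $G_\fre$ are precisely the points $w_K$, with exactly one $w_K$ lying in each bounded gap $K$ of $\fre$, so that $\calC=\{w_K\}_{K\in\calG}$. Consequently $\sum_{K\in\calG}G_\fre(w_K)=\sum_{w\in\calC}G_\fre(w)=PW(\fre)$ by the very definition of $PW(\fre)$, and substituting into \eqref{3A.6A} yields $2\exp(PW(\fre))$, which is \eqref{3A.11}.

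There is no genuine obstacle here, since the argument is purely a reading-off from Theorem \ref{T3A.4}; the only point that needs a sentence of justification is the identification of the critical set $\calC$ of $G_\fre$ with the family $\{w_K\}_{K\in\calG}$ in part (b), which rests on the concavity of $G_\fre$ on each gap established earlier. It is also worth remarking that the PW hypothesis is precisely what makes the series $\sum_{K\in\calG}G_\fre(w_K)$ converge, so that the right-hand side of \eqref{3A.11} is finite and the statement is non-vacuous.
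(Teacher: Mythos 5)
Your proposal is correct and matches the paper's intent exactly: the corollary is stated without proof precisely because it is the specialization of Theorem \ref{T3A.4} to $\calG_0=\emptyset$ and to $\calG_0=\calG$ with $x_K=w_K$, using the identification $\calC=\{w_K\}_{K\in\calG}$ already noted in the proof of Theorem \ref{T3A.3}. Nothing further is needed.
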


In general, we cannot say when there exist any subsequences of the type in the Corollary but can with a few extra assumptions (see the discussion after the example). We can analyze an especially simple case completely:

\begin{example} \lb{E3A.6} Fix $0 < a <b$ and let $\fre = [-b,-a]\cup [a,b]$, a two band set symmetric about $0$.  Then for $n$ odd, $T_n$ is odd (by uniqueness
of the Chebyshev polynomial), so the unique zero in the gap $(-a,a)$ is at $x=0$ which, by symmetry, is the critical point of $G_\fre$ in the gap.  Thus the ratio along the odds
is given by \eqref{3A.11}.

On the other hand, for $n$ even, $T_n$ is even, so by simplicity of zeros, non-vanishing at $0$.  Since there is at most one zero in $(-a,a)$, there cannot be any,
so $\calG_0$ is empty and thus, the ratio along the evens is given by \eqref{3A.10}.  In fact, more is true. If
\begin{equation*}
  P(x) = 2-\frac{4(x-b)^2}{(a-b)^2}
\end{equation*}
then $\fre = P^{-1}([-2,2])$, so $\norm{T_{2k}}_\fre = 2 C(\fre)^{2k}$ for all $k$ and the lower bound is an equality for all even numbers.  \qed
\end{example}

In \cite{CSYZ2}, we discussed limits of $T_n/\norm{T_n}_\fre$ for $\fre\subset\bbR$ under a stronger condition than PW called DCT.  If $\fre$ has what we
called a canonical generator, which holds in a generic sense, then \cite[Theorem 5.1]{CSYZ2} every Blaschke product occurs as a limit point of the normalized
Chebyshev polynomials which means one has a limit with any set of simple zeros in any set of gaps.  It follows that in this generic DCT case, the set of
limit points of $\norm{T_n}_\fre/C(\fre)^n$ is exactly the interval $[2,2\exp(PW(\fre))]$.

Finite gap sets are always DCT and it is not hard to see that they have a canonical generator in the sense of \cite{CSYZ2} if and only if the harmonic measures of the bands are rationally independent (except for the trivial relation that they sum to 1).  Moreover, it is known (Totik \cite{Totik2001}) that for sets with $q$ gaps (which is a $2q+2$ dimensional space described by $a_1<b_1<\dots<a_{q+1}<b_{q+1}$) the condition of rationally independent harmonic measures is satisfied on the compliment of a set of dimension $q+2$ so this rational independence condition is highly generic.  We thus have

\begin{theorem}  \lb{T3A.7}  Let $\fre\subset\bbR$ be a set with $q$ gaps so that the harmonic measures of any $q$ of the $q+1$ bands are rationally independent.  Then the set of limit points of $\norm{T_n}_\fre/C(\fre)^n$ is exactly the interval $[2,2\exp(PW(\fre))]$.
\end{theorem}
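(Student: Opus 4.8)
The plan is to combine the two-sided bound on $\norm{T_n}_\fre/C(\fre)^n$ that is already available with the freedom, supplied by \cite{CSYZ2}, to prescribe the limiting location of the zeros of $T_{n_j}$ in the gaps of $\fre$. First I would dispose of the easy inclusion. Schiefermayr's bound \eqref{1.4} gives $\norm{T_n}_\fre/C(\fre)^n \ge 2$ for every $n$, and since $\fre$, having finitely many gaps, is a PW set, Theorem \ref{T3A.3} (equivalently \eqref{3A.1}) gives $\norm{T_n}_\fre/C(\fre)^n \le 2\exp(PW(\fre))$. Hence every limit point lies in $[2, 2\exp(PW(\fre))]$, and the task is to show that every point of this interval is in fact a limit point.

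For the reverse inclusion I would invoke the machinery of \cite{CSYZ2}. A set with finitely many gaps is always DCT, and, as explained in the discussion preceding the theorem, the hypothesis that the harmonic measures of any $q$ of the $q+1$ bands are rationally independent is precisely the statement that $\fre$ possesses a canonical generator in the sense of \cite{CSYZ2}. By \cite[Theorem 5.1]{CSYZ2} every Blaschke product then occurs as a limit point of the normalized Chebyshev polynomials; concretely, for any gap collection $\calG_0 \subseteq \calG$ and any assignment of points $x_K \in K$ for $K \in \calG_0$, there is a subsequence $\{n_j\}$ whose associated limiting zero gap collection is exactly $(\calG_0, \{x_K\}_{K\in\calG_0})$. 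For each such subsequence, Theorem \ref{T3A.4} evaluates the limit:
\begin{equation*}
  \lim_{j\to\infty}\norm{T_{n_j}}_\fre/C(\fre)^{n_j}=2\exp\Bigl(\sum_{K\in\calG_0}G_\fre(x_K)\Bigr).
\end{equation*}

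It then remains to check that, as $(\calG_0,\{x_K\})$ ranges over all admissible choices, the exponent $\sum_{K\in\calG_0}G_\fre(x_K)$ sweeps out all of $[0,PW(\fre)]$. Take $\calG_0=\calG$. On each closed gap $\overline K=[a_K,b_K]$ the Green's function $G_\fre$ is continuous, vanishes at the two endpoints, and is strictly concave with unique maximum $G_\fre(w_K)$ at the critical point $w_K\in K$ (this concavity is already used in the proof of Theorem \ref{T3A.3}); hence, as $x_K$ ranges over the open gap $K$, $G_\fre(x_K)$ takes every value in $(0,G_\fre(w_K)]$. Since $\prod_{K\in\calG}K$ is connected and the sum map is continuous, $\sum_{K\in\calG}G_\fre(x_K)$ then takes every value in $(0,PW(\fre)]$, where $PW(\fre)=\sum_{K\in\calG}G_\fre(w_K)$; adjoining the value $0$ obtained from the choice $\calG_0=\emptyset$ (cf.\ Corollary \ref{C3A.5}), the set of realized exponents is exactly $[0,PW(\fre)]$. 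Consequently the set of realized subsequential limit values of $\norm{T_n}_\fre/C(\fre)^n$ is exactly $2\exp([0,PW(\fre)])=[2,2\exp(PW(\fre))]$, which is closed and contains both endpoints; together with the first paragraph this shows that the set of limit points is precisely $[2,2\exp(PW(\fre))]$.

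The hard part is none of the above but the input from \cite{CSYZ2}: one must know that a canonical generator forces \emph{every} Blaschke product — equivalently, every admissible limiting configuration of zeros in the gaps — to be realized along some subsequence, and, for finite gap sets, that the existence of such a generator is equivalent (via Totik's genericity theorem \cite{Totik2001}) to rational independence of the band harmonic measures. Granting those facts, the present theorem is just bookkeeping: the already-established two-sided bound plus an intermediate-value argument on the product of the closed gaps.
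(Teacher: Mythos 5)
Your proposal is correct and follows essentially the same route the paper takes: the paper derives Theorem \ref{T3A.7} from the discussion immediately preceding it, namely the two-sided bounds, the fact that rational independence of the band harmonic measures gives a canonical generator so that \cite[Theorem 5.1]{CSYZ2} realizes every admissible limiting zero configuration, and then Theorem \ref{T3A.4} to evaluate each subsequential limit. The only difference is that you spell out the intermediate-value/connectedness argument showing the exponents sweep all of $[0,PW(\fre)]$, which the paper leaves implicit.
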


\section{Invariance of Widom Factors Under Polynomial Preimages} \lb{s4}

This final section is connected to the earlier ones, in that it involves polynomial inverse images, but is otherwise unrelated.  In the work of Widom \cite{Widom} on asymptotics of Chebyshev polynomials, a key object is $\norm{T_n}_\fre/C(\fre)^n$, which we, following Goncharov--Hatino\v{g}lu \cite{GonHat}, call Widom factors.  We want to prove:

\begin{theorem} \lb{T4.1} Let $\fre\subset\bbC$ be a compact set, $P(z)$ a monic polynomial of degree $k\ge 1$, and $\fre_P=P^{-1}(\fre)=\{z\in\bbC \,|\, P(z)\in \fre\}$. Then for every Chebyshev polynomial $T_n$ of $\fre$, the polynomial $T_n\circ P$ is a Chebyshev polynomial of $\fre_P$ and
\begin{equation} \lb{4.1}
   \frac{\norm{T_n}_\fre}{C(\fre)^n} = \frac{\norm{T_n\circ P}_{\fre_P}}{C(\fre_P)^{nk}}.
\end{equation}
\end{theorem}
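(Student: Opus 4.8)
The plan is to reduce everything to three facts and assemble them: (i) $\fre_P$ is a compact infinite set with
\[
  C(\fre_P) = C(\fre)^{1/k}, \qquad\text{so}\qquad C(\fre_P)^{nk} = C(\fre)^{n};
\]
(ii) $\norm{T_n\circ P}_{\fre_P} = \norm{T_n}_\fre = t_n$; and (iii) the only substantive point, that \emph{every} monic polynomial of degree $nk$ has $\fre_P$-norm at least $t_n$. Granting (i)--(iii) we are done: $T_n\circ P$ is monic of degree $nk$ with $\fre_P$-norm equal to $t_n$, so by (iii) and the uniqueness of Chebyshev polynomials it is the Chebyshev polynomial $T_{nk}$ of $\fre_P$; and then \eqref{4.1} reads $t_n/C(\fre_P)^{nk} = t_n/C(\fre)^{n}$, which holds by (i) and (ii). (One should assume $C(\fre)>0$, otherwise there is nothing to normalize.)

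For (i) I would argue as in the proof of Theorem \ref{T3.2}: the function $z\mapsto \tfrac1k G_\fre(P(z))$ is nonnegative, harmonic on $\bbC\setminus\fre_P$, vanishes quasi-everywhere on $\fre_P$, and since $P(z)=z^k+\dots$ it has the asymptotics $\log|z| - \tfrac1k\log C(\fre) + \mbox{o}(1)$ at infinity; hence it is the Green's function $G_{\fre_P}$ and $C(\fre_P)=C(\fre)^{1/k}$ (this is the classical behaviour of logarithmic capacity under monic polynomial preimages). That $\fre_P=P^{-1}(\fre)$ is compact and infinite is clear. For (ii) one only needs that $P$ maps $\bbC$ onto $\bbC$, so $P(\fre_P)=\fre$ and therefore $\sup_{z\in\fre_P}\abs{T_n(P(z))} = \sup_{w\in\fre}\abs{T_n(w)} = t_n$.

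The core of the argument is (iii). Let $Q$ be an arbitrary monic polynomial of degree $nk$. For $w\in\bbC$ let $z_1(w),\dots,z_k(w)$ be the roots of $P(Z)-w$ counted with multiplicity, and form the fiberwise average
\[
  (\Pi Q)(w) \;=\; \frac1k\sum_{j=1}^{k} Q\bigl(z_j(w)\bigr).
\]
Since $Q(z_1(w))+\dots+Q(z_k(w))$ is a symmetric function of the roots of the \emph{monic} polynomial $P(Z)-w$, it is a polynomial in the coefficients of $P(Z)-w$, hence a polynomial in $w$; thus $\Pi Q$ is a polynomial. I claim it is monic of degree exactly $n$. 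Write $Q = P^{n}+R$ with $\deg R\le nk-1$. Then $(\Pi(P^{n}))(w)=\tfrac1k\sum_j P(z_j(w))^{n}=w^{n}$ identically, because $P\equiv w$ on the fiber; and for $\abs w$ large every root $z$ of $P(Z)-w$ satisfies $\abs z\le 2\abs w^{1/k}$ (from $\abs{P(z)}\ge\tfrac12\abs z^{k}$ for $\abs z$ large), so $\abs{R(z_j(w))}=\mbox{O}\bigl(\abs w^{(nk-1)/k}\bigr)=\mbox{o}(\abs w^{n})$, which forces the polynomial $\Pi R$ to have degree $\le n-1$. Hence $\Pi Q = w^{n}+\Pi R$ is monic of degree $n$. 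On the other hand, for $w\in\fre$ all of $z_1(w),\dots,z_k(w)$ lie in $\fre_P$, so $\abs{(\Pi Q)(w)}\le \max_j\abs{Q(z_j(w))}\le \norm{Q}_{\fre_P}$ and therefore $\norm{\Pi Q}_\fre\le\norm{Q}_{\fre_P}$. Since $\Pi Q$ is monic of degree $n$, $\norm{\Pi Q}_\fre\ge t_n$, and (iii) follows.

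The step I expect to demand the most care in a full write-up is the assertion that $\Pi Q$ is a polynomial which is \emph{monic of degree exactly $n$}: one must check that no cancellation depresses the leading term (the split $Q=P^{n}+R$ isolates it cleanly) and that the symmetric-function reasoning is unaffected at the critical values of $P$, where the fiber $\{z_j(w)\}$ has coincidences. The remaining ingredients — the capacity identity, the norm identity, and the final appeal to uniqueness of the Chebyshev polynomial — are routine.
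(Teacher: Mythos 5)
Your proposal is correct and follows essentially the same route as the paper: your fiberwise average $\Pi Q$ is exactly the paper's $\sigma_{Q|P}=\hat{Q}\circ P$ (Lemma \ref{L4.2}), and the chain $\norm{T_n\circ P}_{\fre_P}=\norm{T_n}_\fre\le\norm{\Pi Q}_\fre\le\norm{Q}_{\fre_P}$ combined with the Green's-function computation of $C(\fre_P)$ is precisely the paper's argument. The only difference is in the sub-lemma: you establish polynomiality and monicity of the average via symmetric functions of the roots and the split $Q=P^{n}+R$ with a growth estimate, where the paper uses a residue-calculus representation; both are sound.
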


\begin{lemma} \lb{L4.1A} Let $\fre\subset\bbC$ be a compact set, $p$ a polynomial of degree $k\ge1$ with leading coefficient $1/\gamma$, and $\fre_p$ as above. Then $C(\fre_p)^k=|\gamma|C(\fre)$.
\end{lemma}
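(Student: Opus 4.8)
The plan is to establish the stronger, pointwise identity
\[
  G_{\fre_p}(z) = \tfrac1k\, G_\fre\bigl(p(z)\bigr), \qquad z \in \bbC\setminus\fre_p,
\]
between the potential-theoretic Green's functions with pole at $\infty$, and then to read off the relation between capacities by comparing the two sides near $z = \infty$, exactly as in the proofs of Theorems \ref{T3.1} and \ref{T3.2}.

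If $C(\fre) = 0$ there is nothing to prove: $\fre$ is then polar, hence so is $\fre_p = p^{-1}(\fre)$, since preimages of polar sets under nonconstant polynomials are polar \cite{Ransford}, and both sides of $C(\fre_p)^k = \abs{\gamma}\,C(\fre)$ vanish. So assume $C(\fre) > 0$ and set $u(z) = \tfrac1k G_\fre(p(z))$ when $p(z)\notin\fre$, and $u \equiv 0$ on $\fre_p$. Then $u \ge 0$; off $\fre_p$ it is positive and harmonic, being $\tfrac1k G_\fre$ precomposed with the holomorphic map $p$; it tends to $0$ quasi-everywhere along $\partial\fre_p$, because $G_\fre$ does so along $\partial\fre$ and the preimage of the exceptional polar set is again polar; and near $\infty$, using $p(z) = \gamma^{-1}z^k + \cdots$, so that $\log\abs{p(z)} = k\log\abs{z} - \log\abs{\gamma} + o(1)$, together with $G_\fre(w) = \log\abs{w} - \log C(\fre) + o(1)$, we obtain
\[
  u(z) = \log\abs{z} - \tfrac1k\log\!\bigl(\abs{\gamma}\,C(\fre)\bigr) + o(1).
\]
In particular $u \not\equiv 0$, which forces $\fre_p$ to be non-polar (otherwise $u$ would extend across $\fre_p$ to a nonnegative --- hence constant --- harmonic function on all of $\bbC$), and by the standard characterization of the Green's function --- the unique nonnegative function on $\bbC$ that is harmonic off the compact set, vanishes q.e.\ on its boundary, and differs from $\log\abs{z}$ by a bounded quantity at $\infty$ --- we conclude $u = G_{\fre_p}$.

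Comparing the displayed expansion with $G_{\fre_p}(z) = \log\abs{z} - \log C(\fre_p) + o(1)$, the realization of capacity from the Green's function \cite[(3.7.4)\,\&\,(3.7.6)]{HA}, gives $\log C(\fre_p) = \tfrac1k\log(\abs{\gamma}\,C(\fre))$, that is, $C(\fre_p)^k = \abs{\gamma}\,C(\fre)$. The only point requiring care is the boundary behaviour needed to identify $u$ with $G_{\fre_p}$, which is exactly where the fact that polynomial preimages of polar sets are polar enters; the rest is bookkeeping with the asymptotics at $\infty$. If one prefers to sidestep that technicality, the same identity follows symmetrically from $\sum_{p(z)=w} G_{\fre_p}(z) = G_\fre(w)$ via the same characterization argument, while the single inequality $C(\fre_p)^k \le \abs{\gamma}\,C(\fre)$ drops out directly, as in the proofs of Theorems \ref{T1.3}--\ref{T1.4}, from the observation that $\gamma^n(T_n\circ p)$ is monic of degree $nk$ with norm $\abs{\gamma}^n\norm{T_n}_\fre$ on $\fre_p$ (as $p(\fre_p)=\fre$), upon taking $nk$-th roots and letting $n\to\infty$.
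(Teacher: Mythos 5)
Your proof is correct and follows essentially the same route as the paper's: both establish the identity $G_{\fre_p} = \tfrac1k\,(G_\fre\circ p)$ by the uniqueness characterization of the Green's function (harmonic off $\fre_p$, vanishing q.e.\ on the boundary, $\log\abs{z}+O(1)$ at infinity) and then compare the constant terms in the expansions at $\infty$. The only difference is that you spell out the degenerate polar case and the boundary-regularity details that the paper leaves implicit, which is a harmless (and slightly more careful) elaboration rather than a different argument.
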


\begin{proof} Let $G_\fre$ and $G_{\fre_p}$ be the Green's functions for $\fre$ and $\fre_p$, respectively. Then $G_{\fre_p}= \tfrac{1}{k} (G_\fre\circ p)$ since
both functions are harmonic on $\bbC\setminus\fre_p$, zero q.e. on $\partial\fre_p$, and asymptotically $\log|z|$ at infinity. Comparing the constant terms in the asymptotics at infinity yields the claimed result.  \end{proof}

Suppose $p(z)$, $q(z)$ are two polynomials with $k=\deg(p)\ge 1$. The average of $q$ over $p$ is defined by
\begin{equation} \lb{4.2}
   \sigma_{q|p}(z) = \frac{1}{k} \sum_{\{\zeta\,|\,p(\zeta)=p(z)\}}q(\zeta),
\end{equation}
where the values of $\zeta$ are repeated according to their multiplicity.

\begin{lemma}[\cite{OPZ96}] \lb{L4.2}
The average of $q$ over $p$ is a polynomial in $p$, in fact, $\sigma_{q|p}=\hat{q}\circ p$ for some polynomial $\hat{q}$ of degree at most $\deg(q)/\deg(p)$.
\end{lemma}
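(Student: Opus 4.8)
The plan is to prove Lemma \ref{L4.2} by reducing it to a symmetric function / interpolation argument. First I would observe that $\sigma_{q|p}(z)$ as defined in \eqref{4.2} is, for each fixed value $w$ in the range of $p$, the sum $\frac{1}{k}\sum_{p(\zeta)=w}q(\zeta)$ where the roots $\zeta$ of $p(\zeta)-w=0$ are counted with multiplicity. Writing $p(z)=a_k z^k+\dots+a_0$ with $a_k\neq 0$, the polynomial $p(\zeta)-w$ in $\zeta$ has roots $\zeta_1(w),\dots,\zeta_k(w)$, and by Newton's identities the power sums $\sum_j \zeta_j(w)^m$ are polynomials in the elementary symmetric functions of the $\zeta_j(w)$, which are (up to signs) the ratios $a_{k-1}/a_k,\dots,(a_0-w)/a_k$ of the coefficients of $p(\zeta)-w$. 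In particular each power sum $\sum_j \zeta_j(w)^m$ is a polynomial in $w$, and hence so is $\sum_j q(\zeta_j(w))$ for any polynomial $q$. This shows that there is a polynomial $\hat q$ in one variable with $\sum_{p(\zeta)=w}q(\zeta)=k\,\hat q(w)$ for all $w$ in the range of $p$; since that range is infinite, this determines $\hat q$ as a genuine polynomial, and substituting $w=p(z)$ gives $\sigma_{q|p}=\hat q\circ p$.

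Next I would pin down the degree bound. The cleanest route is an estimate at infinity: as $|z|\to\infty$, the $k$ preimages $\zeta$ of $w=p(z)$ under $p$ all satisfy $|\zeta|=O(|z|)$ (indeed $\zeta^k\sim w/a_k\sim z^k$), so $\sigma_{q|p}(z)=\frac1k\sum q(\zeta)=O(|z|^{\deg q})$. Since $\sigma_{q|p}=\hat q\circ p$ and $p$ has degree $k$, we get $|\hat q(p(z))|=O(|z|^{\deg q})$, i.e. $\deg(\hat q)\cdot k\le \deg q$, so $\deg\hat q\le \deg(q)/\deg(p)$, which is exactly the claimed bound (and forces $\hat q$ to be constant when $\deg q<\deg p$). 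An alternative, more algebraic, justification of the same bound is to expand $\sum_j \zeta_j(w)^m$ via Newton's identities and track that the ratios $(a_0-w)/a_k$ etc. contribute at most one power of $w$ each while the total scaling weight of $\zeta^m$ is $m$ relative to $w^k$; either way the bound drops out.

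The main obstacle I anticipate is not any deep idea but rather the bookkeeping needed to make the ``polynomial in $w$'' claim fully rigorous when $p(\zeta)-w$ has repeated roots for some special values of $w$: one must be careful that ``repeated according to their multiplicity'' is exactly what makes $\sum_{p(\zeta)=w}\zeta^m$ coincide with the power-sum polynomial in the coefficients, with no exceptional values of $w$. This is handled cleanly by working with the resultant / the formal identity $\prod_j(T-\zeta_j(w)) = p(T)-w$ treated as an identity of polynomials in $T$ with coefficients in $\bbC[w]$, so that the symmetric-function formulas are valid identically in $w$ and then specialized. Since Lemma \ref{L4.2} is cited from \cite{OPZ96}, it would also be entirely acceptable to give just this short argument as a sketch and refer the reader there for the details; I would keep the write-up to the two observations above — symmetric functions give a polynomial in $p$, and growth at infinity gives the degree bound.
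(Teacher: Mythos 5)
Your proposal is correct, but it proves the lemma by a genuinely different route than the paper. The paper's proof is a one-line residue computation: it writes $\sigma_{q|p}(z)$ as the contour integral $\frac{1}{2\pi i\deg(p)}\oint_{|\zeta|=R}\frac{q(\zeta)p'(\zeta)}{p(\zeta)-p(z)}\,d\zeta$, expands the kernel as a geometric series in $p(z)/p(\zeta)$ (taking $R$ large enough that $|p(z)|<|p(\zeta)|$ on $|\zeta|=R$), and reads off that $\sigma_{q|p}$ is a power series in $p(z)$ whose coefficients are contour integrals that vanish for $j>\deg(q)/\deg(p)$ upon letting $R\to\infty$; this yields the polynomial structure and the degree bound simultaneously, in the form of the explicit expansion \eqref{4.3}. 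You instead split the argument in two: Newton's identities applied to $p(T)-w$, viewed as a polynomial in $T$ over $\bbC[w]$, show that each power sum of the roots is a polynomial in $w$ (only the constant coefficient $a_0-w$ depends on $w$, and linearly), whence $\sigma_{q|p}=\hat q\circ p$; then a growth estimate at infinity, $|\zeta|=O(|z|)$ for the preimages of $p(z)$, forces $k\deg\hat q\le\deg q$. Both arguments are complete and rigorous, and your handling of the multiplicity issue via the formal factorization over $\bbC[w]$ is exactly right. The main thing the paper's approach buys that yours does not deliver for free is the explicit coefficient formula in \eqref{4.3}, which is reused in the proof of Theorem \ref{T4.1} to see that $\hat Q$ is \emph{monic} of degree $n$ when $Q$ and $P$ are monic; with your approach that extra fact would require a separate (easy) computation of the leading asymptotics of $\frac1k\sum_j Q(\zeta_j(w))$ as $w\to\infty$. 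Your approach, in turn, is more elementary (no contour integration) and makes transparent why the result is purely algebraic, valid over any field of characteristic zero.
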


\begin{proof} Fix $z\in\bbC$. Then for all sufficiently large $R>0$, by the residue calculus,
\begin{align}
  \sigma_{q|p}(z) &= \frac{1}{2\pi i\deg(p)}\cint_{|\zeta|=R}\frac{q(\zeta)p'(\zeta)}{p(\zeta)-p(z)}\,d\zeta  \nonumber \\
                  &= \sum_{j=0}^\infty \frac{p(z)^j}{2\pi i\deg(p)} \cint_{|\zeta|=R}\f{q(\zeta)p'(\zeta)d\zeta}{p(\zeta)^{j+1}}\,d\zeta \lb{4.3}
\end{align}
by picking $R$ so large that $|\zeta|=R \Rightarrow |p(z)| < |p(\zeta)|$.   Since, for $j>\deg(q)/\deg(p)$, the integrals are zero (by taking $R$ to $\infty$), we conclude that $\sigma_{q|p}=\hat{q}\circ p$ with $\deg(\hat{q})\le\deg(q)/\deg(p)$.
\end{proof}

\begin{proof} [Proof of Theorem \ref{T4.1}]
Let $Q$ be a monic polynomial of degree $nk$. By Lemma~\ref{L4.2}, $\sigma_{Q|P}(z)=\hat{Q}\circ P$ where $\deg(\hat{Q})\le n$. In fact, since $P$ is monic of degree $k$ and $Q$ is monic of degree $nk$ it follows from \eqref{4.3} that $\hat{Q}$ is monic of degree $n$. In addition, it follows from the definition of the average that $\norm{\sigma_{Q|P}}_{\fre_P}\le \norm{Q}_{\fre_P}$. Thus, $\norm{T_n\circ P}_{\fre_P} = \norm{T_n}_\fre \le \norm{\hat{Q}}_\fre = \norm{\sigma_{Q|P}}_{\fre_P} \le \norm{Q}_{\fre_P}$ so $T_n\circ P$ is the $nk$-th Chebyshev polynomial of $\fre_p$.

To get the equality of Widom factors note that $\norm{T_n}_\fre=\norm{T_n\circ P}_{\fre_P}$ and $C(\fre_P)^k=C(\fre)$ by Lemma~\ref{L4.1A}.
\end{proof}


\end{document}